\documentclass[11pt]{amsart}   	
\pdfoutput=1

\usepackage[margin=1in]{geometry}

\usepackage{amsmath,amssymb,amsfonts,amsthm}
\usepackage{hyperref}
\usepackage{mathtools}
\usepackage{tikz-cd}
\usepackage{tikz}
\usepackage{enumitem}
\usepackage{makecell}%To keep spacing of text in tables
\setcellgapes{4pt}%parameter for the spacing

\newcommand{\add}{\mathsf{add}}

\newcommand{\Fac}{\mathsf{Fac}}

\DeclareMathOperator{\Hom}{\mathrm{Hom}}

\newtheorem{theorem}{Theorem}[section]
\newtheorem{corollary}[theorem]{Corollary}
\newtheorem{lemma}[theorem]{Lemma}
\newtheorem{proposition}[theorem]{Proposition}

%%For the introduction

\theoremstyle{definition}
\newtheorem{definition}[theorem]{Definition}
\newtheorem{example}[theorem]{Example}

%bib
\usepackage[style=alphabetic]{biblatex}

%references
\addbibresource{ref.bib}

\usepackage[margin=1in]{geometry}

\title{Reduction techniques to identify connected components of mutation quivers}
\author{Håvard Utne Terland}
\address{Department of Mathematical Sciences, Norwegian University of Science and Technology (NTNU), 7491 Trondheim, NORWAY}
\email{havard.u.terland@ntnu.no}
%\author{Author2}
%\address{Affiliation2}
%\email{email2@email.com}
%\date{\today}
%\subjclass[2020]{}

\begin{document}

\maketitle

%%%%%%%%%%%%%%%%%%%%%%%%%%%%%%%%%%%%%%

\begin{abstract}
   Important objects of study in $\tau$-tilting theory include the $\tau$-tilting pairs over an algebra on the form $kQ/I$, with $kQ$ being a path algebra and $I$ an admissible ideal.
   
   In this paper, we study aspects of the combinatorics of mutation quivers of support $\tau$-tilting pairs, simply called mutation quivers. In particular, we are interested in identifying connected components of the underlying graphs of such quivers.
   
   We give a class of algebras with two simple modules such that every algebra in the class has at most two connected components in its mutation quiver, generalizing a result by Demonet, Iyama and Jasso \cite[Theorem 6.7]{dij17}. We also give examples of algebras with strictly more than two components in their mutation quivers.
\end{abstract}

%%%%%%%%%%%%%%%%%%%%%%%%%%%%%%%%%%%%%%

%\tableofcontents

%%%%%%%%%%%%%%%%%%%%%%%%%%%%%%%%%%%%%%

\section{Introduction}
$\tau$-tilting theory was introduced in \cite{tau} as a generalization of classical tilting theory. An important feature of $\tau$-tilting theory is that mutation of support $\tau$-tilting pairs is always possible, inducing a quiver structure on the set of support $\tau$-tilting pairs where arrows indicate (left) mutations and whose underlying graph is regular.

It is well-known that algebras with only finitely many $\tau$-rigid objects up to isomorphism (\textit{$\tau$-tilting finite algebras)} have connected mutation quivers, a property which need not hold for algebras with infinitely many $\tau$-rigid modules up to isomorphism (\textit{$\tau$-tilting infinite algebras}). This paper deals with the problem of identifying connected components of such mutation quivers for various algebras.

From a computational perspective, we consider this a natural problem. Indeed, as mutation can be computed algorithmically, mutation quivers may be traversed. Thus if one can determine all components of a mutation quiver, one can also effectively enumerate all support $\tau$-tilting pairs up to isomorphism using induction, starting from a representative of each component. 

Inspired by Demonet, Iyama and Jasso, who in \cite[Theorem 6.17]{dij17} prove that a certain algebra with two simple modules has exactly two connected components in its mutation quiver, we continue the search for algebras where we can determine exactly how many components its mutation quiver has. In particular, we develop in Section \ref{technique_section} a reduction technique allowing us to in some cases transport 2-term complexes between module categories while maintaining rigidity. Using this technique, we generalize \cite[Theorem 6.17]{dij17}.

We then show that not all algebras have at most two components in their mutation quiver. We give an elementary proof that there is a disconnected algebra with exactly 4 components in its mutation quiver, and show using the techniques developed by Brüstle, Smith and Treffinger in \cite{Br_stle_2019} that we can construct a connected algebra one with at least 4 connected components in its mutation quiver.

%Since mutation of such pairs can be explicitly computed, a possible avenue of attack to understand the support $\tau$-tilting pairs of a $\tau$-tilting infinite algebra would be to identify a representative of each component of its mutation quiver. If such a set of representatives is given, it follows immediately that all other support $\tau$-tilting pairs may be computed by a finite number of left or right mutations from one of these representatives. 

\section*{Acknowledgments}
This work was financed by the Norwegian Research Council, through the project Applications of Reduction Techniques and Computations in Representation Theory (ARTaC) with grant number FRINATEK 301375. The author is grateful for the advice of Aslak Bakke Buan and Eric J. Hanson.

\section{Setting}
All algebras in this paper are assumed to be on the form $kQ/I$, where $k$ is a field which need not be algebraically closed, $Q$ a finite quiver and $I$ an admissible ideal of the path algebra $kQ$. In particular, all algebras considered are finite dimensional.

Given a path algebra $kQ$, we denote by $r$ the ideal generated by all paths of length $1$. For an algebra $A = kQ/I$, we denote by $\underline{r} \subseteq A$ the ideal induced by $r \subseteq kQ$ via the canonical epimorphism $kQ \to kQ/I$.

We denote by $\text{mod } A$ the category of finite-dimensional left $A$-modules. We implicitly identify modules over $A$ with their corresponding representations over the quiver of $A$.

Given a module $M$ over an algebra $kQ/I$, we denote by $[M]$ the dimension vector of $M$. This demands an implicit order on the points or vertices of $Q$. The simple module in point $i$ is denoted $S(i)$ and has dimension vector $e_i$, where $e_i$ is the $i$'th element of the standard basis for $\mathbb{Z}^n$.

An $A$-module is called basic if no two non-zero indecomposable summands are isomorphic. 

Given $A$-modules $M$ and $N$, we denote by $hom_A(M,N)$ the dimension of $\Hom_A(M,N)$ as a $k$-vector space.

Given a module $M$, we denote by $\add M$ the category of modules $N$ such that all indecomposable summands of $N$ are summands of $M$. For such a module $M$, we denote by $\Fac M$ the subcategory of modules which can be exhibited as a quotients of modules in $\add M$.

\subsection{$\tau$-tilting theory}
We recall some important definitions from \cite{tau}. An $A$-module $M$ is called $\tau$-rigid if $\Hom_A(M,\tau M) = 0$.

\begin{definition}\cite[Definition 0.3]{tau}
	A pair $(M,P)$ where $M$ is $\tau$-rigid and $P$ is projective such that $\Hom_A(P,M) = 0$ is called a $\tau$-rigid pair. 
	
	Except when otherwise is stated, it is assumed that both elements $M$ and $P$  of such a pair $(M,P)$ are basic modules. 
	
\end{definition}

We say that a $\tau$-rigid pair $(N,Q)$ is a summand of a $\tau$-rigid pair $(M,P)$ if $N$ is a summand of $M$ and $Q$ is a summand of $P$.

A $\tau$-rigid pair is called support $\tau$-tilting if $|M| + |P| = |A|$ and almost support $\tau$-tilting if $|M|+|P| = |A|-1$. By \cite[Proposition 1.2, Proposition 2.3]{tau}, a $\tau$-rigid pair $(M,P)$ has at most $|A|$ summands, that is, $|M| + |P| \leq |A|$.

An important property of support $\tau$-tilting pairs is that mutation is always possible.

\begin{theorem}\cite[Theorem 2.18]{tau}
	For an almost complete support $\tau$-tilting pair $(M,P)$, there are exactly two distinct support $\tau$-tilting pairs $(M_0,P_0)$ and $(M_1,P_1)$ having $(M,P)$ as a summand.
\end{theorem}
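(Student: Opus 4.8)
The plan is to establish the two halves of the statement separately: \emph{existence} of at least two completions, and the fact that there are \emph{no more than two}. It is convenient to translate the problem using two standard correspondences, writing $n:=|A|$. First, basic support $\tau$-tilting pairs $(M,P)$ over $A$ correspond to functorially finite torsion classes in $\mathsf{mod}\,A$, with $(M,P)\mapsto \Fac M$, the module part $M$ recovered as the basic additive generator of the Ext-projective objects of $\Fac M$ and the projective part $P$ recording which indecomposable projectives are annihilated by $\Fac M$. Second, they correspond to 2-term silting complexes in $K^{b}(\mathrm{proj}\,A)$, via $(M,P)\mapsto (P^{1}\to P^{0})\oplus P[1]$ where $P^{1}\to P^{0}\to M\to 0$ is a minimal projective presentation; under this an almost complete support $\tau$-tilting pair becomes an almost complete 2-term presilting complex, i.e.\ one with $n-1$ indecomposable summands. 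I would use the torsion-class language for existence and the silting/$g$-vector language for the counting.

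For existence, I would first produce one completion by a Bongartz-type completion of the $\tau$-rigid pair $(U,Q)$ underlying $(M,P)$: using that ${}^{\perp}(\tau U)=\{X:\Hom_{A}(X,\tau U)=0\}$ is a functorially finite torsion class when $U$ is $\tau$-rigid, one obtains a largest functorially finite torsion class $\mathcal{T}_{1}$ with $U$ among its Ext-projectives and $\Hom_{A}(Q,\mathcal{T}_{1})=0$; its associated support $\tau$-tilting pair has $(U,Q)$ as a summand. To obtain a \emph{second}, distinct completion I would carry out the exchange move at the extra indecomposable summand of that pair. If that summand is a module $X$ with $X\notin\Fac U$, take a minimal left $\add U$-approximation $g\colon X\to U^{0}$, show $g$ is a monomorphism, and set $X'=\coker g$ (passing to its basic version); the candidate is $(U\oplus X',Q)$. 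If $X\in\Fac U$, the move instead deletes $X$ from the module part and adjoins a suitable indecomposable projective to $Q$. If the extra summand is itself a projective, one runs the construction in reverse, recovering a module summand as the cokernel of an approximation. In each case one must check that the new pair is basic $\tau$-rigid, still has $(U,Q)$ as a summand, has $n$ indecomposable summands, is Ext-projective in the torsion class it generates (hence support $\tau$-tilting), and is genuinely different from $\mathcal{T}_{1}$'s pair. The homological heart here is that $\tau$-rigidity is preserved under the exchange --- statements of the form ``if $\Hom_{A}(U,\tau X)=0=\Hom_{A}(X,\tau U)$ and $X'$ is the cokernel of the minimal left $\add U$-approximation of $X$, then $U\oplus X'$ is again $\tau$-rigid'' --- which one proves from the Auslander--Reiten formulas together with the basic lemmas of \cite{tau} (Propositions~1.2 and~2.3).

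For ``no more than two'' I would argue with $g$-vectors (indices in $K_{0}(\mathrm{proj}\,A)\cong\mathbb{Z}^{n}$), importing from the 2-term silting picture the facts that $g$-vectors are sign-coherent and that the index map is injective on basic $\tau$-rigid pairs, so that the $g$-vectors of the indecomposable summands of any support $\tau$-tilting pair form a $\mathbb{Z}$-basis of $\mathbb{Z}^{n}$. The $n-1$ summands of the common pair $(U,Q)$ span a sublattice $L$ with $\mathbb{Z}^{n}/L\cong\mathbb{Z}$; the extra summand of any completion has $g$-vector mapping to a generator of $\mathbb{Z}^{n}/L$, so to $+e$ or $-e$ for a fixed generator $e$. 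It remains to see that the sign determines the completion, i.e.\ that two completions ``on the same side'' coincide. For this I would use that the torsion classes of all completions lie in the interval between the pair of $\mathcal{T}_{1}$ and the ``co-Bongartz'' completion $\mathcal{T}_{0}$ constructed in the previous step, and that moving across the exchanged summand is strictly monotone on torsion classes, leaving no room for a third --- equivalently, in $g$-fan terms, the cone spanned by the $g$-vectors of $(U,Q)$ is a wall, and a wall of the (non-overlapping) $g$-fan borders at most two maximal cones.

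The step I expect to be the main obstacle is the exchange-sequence construction together with its rigidity verification: getting the case division right (module vs.\ projective summand; inside vs.\ outside $\Fac U$) and proving in each case that the candidate is not merely a pair with the right number of summands but genuinely $\tau$-rigid and support $\tau$-tilting. The counting half is conceptually shorter but also not free, as it leans on sign-coherence and injectivity of $g$-vectors, which in a self-contained account require the passage to $K^{b}(\mathrm{proj}\,A)$ and a count on minimal presentations, and on the fact that the cones of distinct support $\tau$-tilting pairs do not overlap. An alternative that trades the case analysis for external input is to invoke Jasso's $\tau$-tilting reduction to localize at the ``missing'' simple, or Aihara--Iyama's theorem that 2-term presilting complexes admit 2-term silting mutations.
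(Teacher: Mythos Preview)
The paper does not give its own proof of this theorem: it is stated as a background result with the citation \cite[Theorem 2.18]{tau} and no argument. There is therefore nothing in the paper to compare your proposal against.

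That said, your sketch is a faithful outline of how the result is actually established in \cite{tau}: Bongartz-type completion for one completion, the explicit exchange construction (via minimal left $\add U$-approximations, with the case split according to whether the extra summand lies in $\Fac U$ or is projective) for the second, and then an argument that there is no third. Your proposed ``at most two'' step via $g$-vectors and the lattice quotient $\mathbb{Z}^n/L\cong\mathbb{Z}$ is a legitimate route, though note that in \cite{tau} this half is handled more directly through the torsion-class correspondence: any completion of $(U,Q)$ determines a functorially finite torsion class in the interval between the Bongartz and co-Bongartz completions, and one shows this interval has exactly two elements. Either approach works; the $g$-vector argument you describe leans on sign-coherence and the non-overlapping of cones, which in \cite{tau} are derived later in the paper, so if you are reconstructing \cite{tau} in its original order the torsion-class argument is cleaner.
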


In the above theorem, $(M_0,P_0)$ and $(M_1,P_1)$ are said to be mutations of each other. We may denote this as $(M_1,P_1) = \mu_X(M_0,P_0)$ where $X$ is the unique indecomposable summand of $(M_0,P_0)$ which is not a summand of $(M_1,P_1)$.

Mutation induces a natural graph structure on the set of support $\tau$-tilting pairs. In fact, one can also naturally give mutation a direction.

\begin{proposition}\cite[Definition-Proposition 2.28, Theorem 2.7]{tau}
	Let $T_1 = (M_1,P_1)$ and $T_2 = (M_2,P_2)$ be support $\tau$-tilting pairs which are mutations of each other. Then either $\text{Fac }M_1 \subset \text{Fac }M_2$ or $\text{Fac }M_2 \subset \text{Fac }M_1$. In the first case, $T_2$ is called a right-mutation of $T_1$ and in the second case $T_2$ is called a left-mutation of $T_1$. Note that if $T_2$ is a left-mutation of $T_1$ if and only if $T_1$ is a right-mutation of $T_2$.
\end{proposition}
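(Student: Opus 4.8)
The plan is to deduce the statement from two results of \cite{tau}: the bijection $(M,P)\mapsto\Fac M$ between isomorphism classes of basic support $\tau$-tilting pairs and functorially finite torsion classes (\cite[Theorem 2.7]{tau}), and the approximation-theoretic construction of mutation (\cite[Definition-Proposition 2.28]{tau}). The first move is to reduce everything to the single claim that $\Fac M_1$ and $\Fac M_2$ are comparable under inclusion. Indeed, since that assignment is injective and $T_1\neq T_2$, we have $\Fac M_1\neq\Fac M_2$, so once comparability is known exactly one of $\Fac M_1\subsetneq\Fac M_2$ and $\Fac M_2\subsetneq\Fac M_1$ holds; this is the first assertion. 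The terms ``left-mutation'' and ``right-mutation'' are then merely names for the two possibilities, and the concluding biconditional is immediate from the symmetry of the definition, so nothing further is needed there.

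To establish comparability I would let $(U,Q)$ denote the common almost-complete support $\tau$-tilting summand of $T_1$ and $T_2$, so that each $T_i$ arises from $(U,Q)$ by adjoining a single indecomposable summand $X_i$, either to the module part or to the projective part. At least one of $X_1,X_2$ is adjoined to the module part: if both were adjoined to the projective part then $M_1=U=M_2$, hence $\Fac M_1=\Fac U=\Fac M_2$, contradicting $T_1\neq T_2$ via \cite[Theorem 2.7]{tau}. So assume without loss of generality that $M_1=U\oplus X_1$ and $P_1=Q$, and run the left-mutation construction of \cite[Definition-Proposition 2.28]{tau} at $X_1$: one forms a minimal left $\add U$-approximation $g\colon X_1\to U_0$, and the output is the pair $(U\oplus\coker g,\,Q)$ when $g$ is injective, and a pair obtained from $(U,Q)$ by adjoining an indecomposable projective (so its module part is $U$) when $g$ is not injective. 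By \cite[Theorem 2.18]{tau} this output, being a completion of $(U,Q)$ distinct from $T_1$, must equal $T_2$. When $g$ is injective, the exact sequence $0\to X_1\xrightarrow{g}U_0\to\coker g\to 0$ exhibits $\coker g$ as a quotient of $U_0\in\add M_1$, so $\coker g\in\Fac M_1$ and hence $M_2=U\oplus\coker g\in\Fac M_1$, giving $\Fac M_2\subseteq\Fac M_1$; when $g$ is not injective $M_2=U\in\Fac M_1$ trivially. In both cases $\Fac M_1$ and $\Fac M_2$ are comparable, and the proposition follows.

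The step I expect to be the main obstacle is the one I am treating as a black box above, namely \cite[Definition-Proposition 2.28]{tau} itself: that the minimal left $\add U$-approximation procedure (together with its non-injective variant, in which a summand migrates into the projective part) actually produces a support $\tau$-tilting pair, and hence, by \cite[Theorem 2.18]{tau}, the second completion of $(U,Q)$. This is where \cite{tau} does the substantive work; once it is granted, the remaining argument is just bookkeeping with the definition of $\Fac$.
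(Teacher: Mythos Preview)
The paper does not prove this proposition; it is stated purely as a citation of \cite[Definition-Proposition~2.28, Theorem~2.7]{tau} and used as background, with no argument given. So there is nothing in the paper to compare your proof against.

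Your sketch is essentially a reconstruction of the argument in \cite{tau} itself: once one grants that the approximation procedure of \cite[Definition-Proposition~2.28]{tau} produces the other completion $T_2$, the inclusion $\Fac M_2\subseteq\Fac M_1$ is immediate because $M_2$ is built from $U$ together with a quotient of an object of $\add U\subseteq\add M_1$. One small caution on the details you have filled in: the dichotomy in \cite{tau} is not literally ``$g$ injective versus not injective'', and the left-approximation construction there only directly produces the \emph{smaller} completion, i.e.\ it presupposes $X_1\notin\Fac U$. A complete argument therefore also needs the observation that at least one of $X_1,X_2$ satisfies this condition, which follows from $\Fac M_1\neq\Fac M_2$ by the same trick you used to show at least one $X_i$ lies in the module part. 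Since you explicitly treat \cite[Definition-Proposition~2.28]{tau} as a black box, this is a detail internal to the cited result rather than a gap in your reduction.
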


It is important to note that left mutations may be explicitly computed, utilizing left approximations. Right mutations may be computed by first passing to the opposite algebra, computing the appropriate left mutation, and going back to the original algebra. These ideas and methods are developed in \cite{tau}.

We now recall the definition of mutation quivers.
\begin{definition}\cite[Definition 2.29]{tau}
	For an algebra $A$, we define the mutation quiver $Q(s\tau\text{-tilt } A)$ to be the quiver with vertex set being the set of support $\tau$-tilting pairs up to isomorphism and arrows $T_1 \to T_2$ exactly when $T_2$ is a left mutation of $T_1$.
\end{definition}

It is not surprising that the mutation quiver of a disconnected algebra $A \times B$ is completely determined by the mutation quiver of $A$ and the mutation quiver of $B$. We now demonstrate how this is done precisely.

\begin{definition}
	Let $Q_1,Q_2$ be quivers with vertex sets $Q_1^0$, $Q_2^0$ and arrow sets $Q_1^1$, $Q_2^1$. The Cartesian product quiver $Q = Q_1 \mathbin{\Box} Q_2$ is defined as the quiver with vertex set $Q_1^0 \times Q_2^0$ and edges on the form $(x_1,y_1) \to (x_2,y_2)$ such that either $x_1 = x_2$ and $y_1 \to y_2$ is an edge in $Q_2^1$, or $y_1 = y_2$ and $x_1 \to x_2$ is an edge in $Q_1^2$.
\end{definition}

	The proof of the below lemma follows immediately from classical results on the Auslander-Reiten transform on disconnected algebras and the definition of mutation. A complete proof may be found in the master thesis of the author.

\begin{lemma}\label{disconnected_mutation_quiver}
	Let $\Lambda = A \times B$ be a disconnected algebra. Then \[Q(s\tau\text{-tilt} \Lambda) \cong Q(s\tau\text{-tilt} A)\mathbin{\Box}Q(s\tau\text{-tilt} B)\] as quivers.
\end{lemma}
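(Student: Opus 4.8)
The plan is to pass through the standard equivalence $\mods(A\times B)\simeq\mods A\times\mods B$, under which a $\Lambda$-module $M$ is identified with a pair $(M_A,M_B)$, and to check that \emph{every} ingredient in the definition of the mutation quiver splits along this product. First I would collect the elementary facts: $\tau_\Lambda(M_A,M_B)=(\tau_A M_A,\tau_B M_B)$; $\Hom_\Lambda\big((M_A,M_B),(N_A,N_B)\big)\cong\Hom_A(M_A,N_A)\oplus\Hom_B(M_B,N_B)$; the indecomposable projective $\Lambda$-modules are precisely the indecomposable projective $A$-modules together with the indecomposable projective $B$-modules; and $|\Lambda|=|A|+|B|$. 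The first two are the classical statements about the Auslander--Reiten translate and $\Hom$ on a product of algebras. From these it is immediate that a pair $(M,P)=\big((M_A,M_B),(P_A,P_B)\big)$ is a $\tau$-rigid pair over $\Lambda$ if and only if $(M_A,P_A)$ is a $\tau$-rigid pair over $A$ and $(M_B,P_B)$ is one over $B$, and that $M$ and $P$ are basic exactly when all four coordinates are. Counting summands then gives that $(M,P)$ is support $\tau$-tilting over $\Lambda$ iff both $|M_A|+|P_A|\le |A|$ and $|M_B|+|P_B|\le|B|$ while $(|M_A|+|P_A|)+(|M_B|+|P_B|)=|A|+|B|$, which forces equality in each coordinate; so the assignment $T\mapsto(T_A,T_B)$ is a bijection between the vertices of $Q(s\tau\text{-tilt }\Lambda)$ and those of $Q(s\tau\text{-tilt }A)\mathbin{\Box}Q(s\tau\text{-tilt }B)$.

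Next I would match arrows. An arrow of $Q(s\tau\text{-tilt }\Lambda)$ out of $T$ is a left mutation $\mu_X(T)$ at an indecomposable summand $X$ of $T$. As $X$ is indecomposable it lies wholly in one factor; say $X=(X_A,0)$ with $X_A$ an indecomposable $A$-module or projective. The almost complete support $\tau$-tilting pair over $\Lambda$ obtained by removing $X$ from $T$ then has $A$-coordinate an almost complete support $\tau$-tilting pair over $A$ and $B$-coordinate $T_B$, still support $\tau$-tilting over $B$. By the coordinatewise description from the first paragraph, the two support $\tau$-tilting pairs over $\Lambda$ containing this almost complete pair are exactly those of the form $(T_A',T_B)$, where $T_A'$ runs over the two support $\tau$-tilting pairs over $A$ containing the common $A$-coordinate; in particular every $\Lambda$-mutation at $X$ changes only the $A$-coordinate, by a single mutation there. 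The symmetric statement holds when $X$ lies in the $B$-factor. Hence arrows of $Q(s\tau\text{-tilt }\Lambda)$ correspond bijectively to pairs consisting of an arrow in one factor together with a fixed vertex of the other factor --- which is exactly the arrow set of the Cartesian product quiver.

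Finally I would check that this bijection preserves the \emph{orientation} of arrows. The key observation is $\Fac_\Lambda(M_A,M_B)=\Fac_A M_A\times\Fac_B M_B$: a quotient of an object of $\add(M_A,M_B)$ decomposes as a pair of quotients of objects of $\add M_A$ and $\add M_B$. Combined with the characterization of left versus right mutation by proper inclusion of $\Fac$, this shows that if $T'$ is obtained from $T$ by mutating the $A$-coordinate (with $T_B$ fixed) then $\Fac_\Lambda M'\subsetneq\Fac_\Lambda M$ if and only if $\Fac_A M_A'\subsetneq\Fac_A M_A$, so a $\Lambda$-left-mutation in the $A$-coordinate corresponds to an $A$-left-mutation, and likewise for the $B$-factor. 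Assembling the vertex bijection, the arrow bijection and this orientation compatibility yields the desired isomorphism of quivers. The only step needing genuine care is the localization of a single mutation to one factor: it relies on the fact that indecomposable $\Lambda$-modules and indecomposable projective $\Lambda$-modules each live in a single factor, together with the coordinatewise descriptions of $\tau$-rigidity and of the summand count --- everything else is bookkeeping.
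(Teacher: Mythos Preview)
Your proposal is correct and follows exactly the route the paper indicates: the paper does not spell out a proof but only remarks that the result ``follows immediately from classical results on the Auslander--Reiten transform on disconnected algebras and the definition of mutation,'' deferring details to the author's master thesis. Your argument is precisely a careful unpacking of that sentence---splitting $\tau$, $\Hom$, projectives, summand counts, mutation, and $\Fac$ coordinatewise---so there is nothing to compare.
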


Looking at vectors coming from projective presentations of $\tau$-rigid modules gives a combinatorial tool crucial to $\tau$-tilting theory.

\begin{definition}\cite[Section 5]{tau}
	For a module $M$, let $P_1 \to P_0 \to M \to 0$ be a minimal projective presentation of $M$, where $P_1 \cong \bigoplus_{i = 1}^n P(i)^{v_i}$ and $P_0 \cong \bigoplus_{i = 1}^n P(i)^{u_i}$. The vectors $v = (v_1,v_2,\dots,v_n)$ and $u = (u_1,u_2,\dots,u_n)$ then encodes $P_1$ and $P_2$ respectively. $g^M$ is defined to be $u - v$. 
\end{definition}

By \cite[Theorem 5.1]{tau}, $g$-vectors uniquely identify $\tau$-rigid modules up to isomorphism. A $\tau$-rigid pair $(M,P)$ is defined to have $g$-vector $g^M - g^P$. For an ordered support $\tau$-tilting pair $T = (M,P)$, we define its $G$-matrix as the matrix whose $i$'th column is the $g$-vector of the $i$'th summand of $T$. Such $G$-matrices are by \cite[Theorem 5.1]{tau} invertible integer matrices.

%$ (M_1\oplus\dots\oplus M_i,P_1\oplus\dots\oplus P_j)$ is defined to have $G$-matrix \[G_{(M,P)} = \begin{bmatrix}
%	g^{M_1} & \dots & g^{M_i} & -g^{P_1} & \dots & -g^{P_j}
%\end{bmatrix}\]

\subsection{2-term pre-silting complexes}
Instead of working with $\tau$-rigid pairs, it is sometimes convenient to consider the equivalent notion of 2-term pre-silting complexes. We here introduce the results needed in this paper, and refer the reader to \cite[Section 3]{tau} for a more detailed treatment on the topic.

%For an additive subcategory $\mathcal{C}$ of a module category, we denote by $K^b(\mathcal{C})$ the homotopy category of bounded chain complexes $\mathbb{P}$ where each $\mathbb{P}^i$ is in $\mathcal{C}$.

We use co-homological notation on complexes. A complex $\mathbb{P}$ is called $2$-term if $\mathbb{P}^i = 0$ for all $i \notin\{0,-1\}$. 

\begin{definition}
	Fix an algebra $A$. A $2$-term object $\mathbb{P}$ in the category $K^b(\add A)$ is called presilting if it is rigid, that is $\Hom_{K^b(\add A)}(\mathbb{P},\mathbb{P}[1]) = 0$
\end{definition} 

We can now state the connection between $\tau$-rigid pairs and 2-term pre-silting complexes.

\begin{proposition}\cite[Proposition 3.7]{tau}
	Let $P_1 \xrightarrow{d_1} P_0 \xrightarrow{d_0} M$ be a minimal projective presentation of a $\tau$-rigid module $M$. Then we have
	
	\begin{enumerate}
		\item $P_1 \xrightarrow{d_1} P_0$ is a pre-silting complex.
		\item If $(M,Q)$ is a $\tau$-rigid pair, then $P_1 \oplus Q \xrightarrow{\begin{bmatrix}d_0 & 0\end{bmatrix}} P_0$ is a 2-term presilting complex.
	\end{enumerate}
\end{proposition}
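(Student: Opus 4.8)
The plan is to reduce both statements to an explicit, chain-level computation of morphism spaces in $K^b(\add A)$ between the $2$-term complexes involved, and then to translate the resulting vanishing conditions into $\tau$-rigidity via the Auslander--Reiten formula. The routine preliminary step is to unwind $\Hom_{K^b(\add A)}(\mathbb{X},\mathbb{Y}[1])$ for $2$-term complexes $\mathbb{X}=(X^{-1}\xrightarrow{f}X^0)$ and $\mathbb{Y}=(Y^{-1}\xrightarrow{g}Y^0)$ over $\add A$: a chain map $\mathbb{X}\to\mathbb{Y}[1]$ is nothing but an $A$-morphism $X^{-1}\to Y^0$ (the requisite squares commute automatically, the relevant differentials of $\mathbb{Y}[1]$ landing in or emanating from zero), and it is null-homotopic precisely when it lies in the sum of the image of $\Hom_A(X^0,Y^0)\to\Hom_A(X^{-1},Y^0)$ (precomposition with $f$) and the image of $\Hom_A(X^{-1},Y^{-1})\to\Hom_A(X^{-1},Y^0)$ (postcomposition with $g$). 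So $\Hom_{K^b(\add A)}(\mathbb{X},\mathbb{Y}[1])$ is the cokernel of the resulting map $\Hom_A(X^0,Y^0)\oplus\Hom_A(X^{-1},Y^{-1})\to\Hom_A(X^{-1},Y^0)$.

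For part (1), I would apply this with $\mathbb{X}=\mathbb{Y}=\mathbb{P}=(P_1\xrightarrow{d_1}P_0)$. Since $P_1$ is projective, $\Hom_A(P_1,-)$ is exact on $P_1\xrightarrow{d_1}P_0\xrightarrow{d_0}M\to 0$, so factoring out the postcomposition summand identifies the quotient with $\Hom_A(P_1,M)$ via composition with $d_0$; and because $P_0$ is projective and $d_0$ is surjective, the image of the precomposition summand then becomes exactly the morphisms $P_1\to M$ that factor through $d_1$. Therefore
\[
\Hom_{K^b(\add A)}(\mathbb{P},\mathbb{P}[1])\;\cong\;\coker\!\big(\Hom_A(P_0,M)\xrightarrow{d_1^*}\Hom_A(P_1,M)\big).
\]
Next, using the Auslander--Reiten translate $\tau=D\operatorname{Tr}$ (with $D=\Hom_k(-,k)$ and $\operatorname{Tr}$ the Auslander--Bridger transpose) together with the adjunction isomorphisms $\Hom_A(M,DN)\cong D(N\otimes_A M)$ and $\Hom_A(P,A)\otimes_A M\cong\Hom_A(P,M)$ for $P$ projective, this cokernel is identified with $D\,\Hom_A(M,\tau M)$. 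As $M$ is $\tau$-rigid, $\Hom_A(M,\tau M)=0$, so $\mathbb{P}$ is presilting; in fact this argument shows that $\mathbb{P}$ is presilting if and only if $M$ is $\tau$-rigid. I expect this last translation---establishing the isomorphism with $D\,\Hom_A(M,\tau M)$, where the minimality of the chosen presentation is exactly what makes the formula valid---to be the step requiring the most care; everything else is bookkeeping.

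For part (2), the key observation is that the complex $\mathbb{P}':=(P_1\oplus Q\xrightarrow{[d_1\ 0]}P_0)$ decomposes in $K^b(\add A)$ as $\mathbb{P}'\cong\mathbb{P}\oplus Q[1]$, where $Q[1]$ denotes $Q$ placed as a stalk complex in degree $-1$. Hence $\Hom_{K^b(\add A)}(\mathbb{P}',\mathbb{P}'[1])$ breaks into four summands: $\Hom_{K^b}(\mathbb{P},\mathbb{P}[1])$, which vanishes by part (1); $\Hom_{K^b}(Q[1],Q[2])\cong\Hom_{K^b}(Q,Q[1])$ and $\Hom_{K^b}(\mathbb{P},Q[2])$, both zero for degree reasons (no nonzero chain maps exist); and $\Hom_{K^b}(Q[1],\mathbb{P}[1])\cong\Hom_{K^b}(Q,\mathbb{P})$, which by the same chain-level computation as above equals $\Hom_A(Q,P_0)$ modulo the morphisms factoring through $d_1$, hence is isomorphic to $\Hom_A(Q,M)$ by exactness of $\Hom_A(Q,-)$; and this vanishes precisely because $(M,Q)$ is a $\tau$-rigid pair, i.e.\ $\Hom_A(Q,M)=0$. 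Summing up, $\Hom_{K^b(\add A)}(\mathbb{P}',\mathbb{P}'[1])=0$, so $\mathbb{P}'$ is presilting, completing the proof.
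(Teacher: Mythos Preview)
The paper does not give its own proof of this proposition: it is stated with a citation to \cite[Proposition~3.7]{tau} and no argument is supplied, so there is nothing in the present paper to compare your approach against. Your proof is correct and is essentially the standard argument one finds in the original reference: compute $\Hom_{K^b(\add A)}(\mathbb{P},\mathbb{P}[1])$ explicitly as a cokernel, identify that cokernel with $D\Hom_A(M,\tau M)$ via the definition of the transpose and tensor--hom adjunction, and for part~(2) split $\mathbb{P}'$ as $\mathbb{P}\oplus Q[1]$ and check the four cross-terms. Two minor remarks: first, the displayed map in the paper's statement has a typo (it should read $d_1$, not $d_0$), which you have silently corrected in your argument; second, your parenthetical that minimality of the presentation is ``exactly what makes the formula valid'' is slightly overstated---the identification of the cokernel with $\operatorname{Tr}M\otimes_A M$ and hence with $D\Hom_A(M,\tau M)$ goes through for any projective presentation, since adding contractible summands does not affect rigidity in $K^b(\add A)$ and only changes $\operatorname{Tr}M$ by projective summands, which are killed by $D\Hom_A(M,-)$ composed with the stable quotient implicit in $\tau$.
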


Pre-silting complexes coming from support $\tau$-tilting objects are by \cite[Theorem 3.2]{tau} exactly the 2-term silting complexes.

%Given a 2-term silting complex $\mathbb{P}$, we can recover the corresponding support $\tau$-tilting pair as $T = (H^0(\mathbb{P},Q)$, where $Q$ is the unique projective making $T$ support $\tau$-tilting. More generally, this bijection can be extended to taking $\tau$-rigid pairs to 2-term pre-silting complexes. 

By abuse of notation, we denote by $g^\mathbb{P}$ the $g$-vector of the $\tau$-rigid pair corresponding to a 2-term pre-silting complex $\mathbb{P}$.

\subsection{Wall and chamber structures}
Considering $g$-vectors as geometrical objects in $\mathbb{R}^n$ gives a natural starting point for studying what are called wall and chamber structures for algebras. We endow $\mathbb{R}^n$ with its Euclidean topology and let $\langle -,-\rangle$ denote the standard inner product. 

%todo: referer til den originale bridgeland-artikkelen.

\begin{definition}
	Let $T = (M,P)$ be a $\tau$-rigid pair. Following \cite{dij17}, the \textit{cone} $C(T)$ induced by $T$ is defined as \[C(T) = \{\sum_{i = 1}^{j} \alpha_ig^{M_i} - \sum_{i = 1}^{k}\beta_ig^{P_i} : \alpha_i,\beta_i \in \mathbb{R}^+ \cup \{0\} \text{ for all } i\}\]
	
	The interior $C^\circ(T) \subset C(T)$ is found by requiring all coefficients $\alpha_i,\beta_i$ to be strictly positive.
\end{definition} 

The wall and chamber structure of an algebra is introduced in \cite{Br_stle_2019} where Brüstle, Smith and Treffinger combine ideas from \cite{bridgeland2017} and \cite{dij17}. Fix an algebra $A$ with $n$ simple modules. For a vector $v \in \mathbb{R}^n$, a module $M$ is called $v$-stable if $\langle v,[M]\rangle = 0$ and $\langle v,[N]\rangle < 0$ for all proper submodules $N$ of $M$.

The stability space of a given module is then defined as the vectors $v$ making the module $v$-stable. Such a stability space is closed under positive scaling, and is called a wall if its linear span has codimension $1$. A chamber is a connected component of $\mathbb{R}^n$ minus all such walls. By \cite[Proposition 3.15]{Br_stle_2019} the interior of a cone induced by a support $\tau$-tilting pair is a chamber. By \cite[Theorem 3.17]{asai2020wallchamber} all chambers are on this form.

The following proposition shows that the cones of support $\tau$-tilting objects intersect in a very controlled fashion. 

\begin{proposition}\cite[Corollary 6.7, b)]{dij17}
	Let $T_1,T_2$ be support $\tau$-tilting pairs. Then $C(T_1) \cap C(T_2) = C(X)$ with $X$ being the maximal $\tau$-rigid pair such that $X$ is a summand of $T_1$ and of $T_2$.
\end{proposition}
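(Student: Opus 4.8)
The statement is an equality of cones, and I would prove the two inclusions separately. The inclusion $C(X)\subseteq C(T_1)\cap C(T_2)$ is immediate: writing $T_i=(M_i,P_i)$, every indecomposable summand of $X$ occurs as an indecomposable summand of $T_i$ in the same (module or projective) role, so the signed $g$-vectors generating $C(X)$ form a subset of those generating $C(T_i)$, whence $C(X)\subseteq C(T_i)$ for $i=1,2$. The substance is the reverse inclusion $C(T_1)\cap C(T_2)\subseteq C(X)$.

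To set that up, I would first record the combinatorial rigidity of the cones. Since the $G$-matrix of a support $\tau$-tilting pair $T$ is an invertible integer matrix, the signed $g$-vectors of its indecomposable summands form an $\mathbb{R}$-basis of $\mathbb{R}^n$ consisting of primitive vectors; hence $C(T)$ is a full-dimensional simplicial cone equal to the closure of its interior $C^\circ(T)$, its faces are exactly the cones $C(Y)$ with $Y$ a summand of $T$, and each $x\in C(T)$ lies in the relative interior of a \emph{unique} such face, the one spanned by the summands occurring with nonzero coefficient in the basis expansion of $x$. By \cite[Theorem 5.1]{tau} distinct indecomposable $\tau$-rigid pairs have distinct $g$-vectors, and being primitive these span distinct rays; so a cone $C(Y)$ remembers the pair $Y$ (read off its extreme rays), and $C(Y_1)=C(Y_2)$ forces $Y_1=Y_2$. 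Granting this, the reverse inclusion reduces to the claim that for $x\in C(T_1)\cap C(T_2)$ the minimal faces $C(Y_x^{(1)})\subseteq C(T_1)$ and $C(Y_x^{(2)})\subseteq C(T_2)$ containing $x$ coincide: once $Y_x^{(1)}=Y_x^{(2)}$, this common pair is a summand of $T_1$ and of $T_2$, hence of the maximal common summand $X$, so $x\in C(Y_x^{(1)})\subseteq C(X)$.

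The case where $x$ lies in both interiors is settled by the wall-and-chamber structure: $C^\circ(T_1)$ and $C^\circ(T_2)$ are chambers by \cite[Proposition 3.15]{Br_stle_2019}, and chambers, being the connected components of the complement of the walls, are pairwise disjoint; so $C^\circ(T_1)=C^\circ(T_2)$, hence $C(T_1)=C(T_2)$ on taking closures, hence $T_1=T_2$ and the claim is trivial. Pushing this from full-dimensional cones to arbitrary faces is where I expect the main difficulty, since planar examples show that two full-dimensional simplicial cones with disjoint interiors can still meet along a non-face, so chamber-disjointness alone does not yield that the cones $C(T)$ together with their faces form a fan. One natural tool here is $\tau$-tilting reduction: if $Y_x^{(1)}$ and $Y_x^{(2)}$ share an indecomposable summand $Z$, then $Z$ is a common summand of $T_1$ and $T_2$, and reduction at $Z$ passes to an algebra $B$ with $|B|=|A|-1$, carrying $T_1,T_2$ to support $\tau$-tilting $B$-pairs and the cones $C(T_i)$, $C(Y_x^{(i)})$ to the corresponding $g$-vector cones over $B$ compatibly with the induced map on $g$-vectors, so that induction on $|A|$ applies; what remains is the case where $Y_x^{(1)}$ and $Y_x^{(2)}$ have no common summand, where one must show $x=0$.

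Closing this last case — equivalently, establishing the full fan property outright — needs genuine structural input about how the cones are glued along their facets: concretely, that mutation is wall-crossing, i.e.\ that $C(T)$ and $C(\mu_{X_i}(T))$ share the facet $C(T/X_i)$ and lie on opposite sides of its linear span, equivalently that the $g$-vector of an indecomposable $\tau$-rigid pair never lies in the relative interior of a cone $C(Y)$ unless $Y$ is that very pair. With this fact — which comes out of the stability/semi-invariant machinery underlying \cite{Br_stle_2019} and \cite{dij17} — a standard completing-a-fan argument applies: a face-closed family of simplicial cones whose facets are shared in exactly this controlled way forms a fan, so $C(T_1)\cap C(T_2)$ is a common face, hence $C(Y)$ for a common summand $Y$, hence contained in $C(X)$. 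Combined with the easy inclusion this gives the asserted equality, and I regard the identification of the correct gluing input (and its extraction from the cited results) as the one step that is not routine.
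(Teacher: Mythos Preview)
The paper does not prove this proposition at all: it is stated as a citation of \cite[Corollary 6.7, b)]{dij17} and used as a black box. There is therefore no ``paper's own proof'' to compare against; the relevant question is whether your sketch stands on its own.

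It does not. You correctly isolate the nontrivial content as the fan property --- that two maximal cones $C(T_1)$, $C(T_2)$ can only meet along a common face --- and you correctly observe that chamber-disjointness of the interiors is not enough to force this. But your proposed closing step is circular: you defer the fan property to ``structural input \dots\ underlying \cite{Br_stle_2019} and \cite{dij17}'', yet that structural input \emph{is} precisely what Demonet--Iyama--Jasso establish in proving the result you are citing. In their paper the fan property is obtained from the bijection between $\tau$-rigid pairs and functorially finite torsion-class intervals together with a sign-coherence argument for $g$-vectors; the wall-crossing description of mutation you invoke is a consequence of, not an independent route to, this. Likewise \cite[Proposition 3.15]{Br_stle_2019} already presupposes the simplicial-complex structure coming from \cite{dij17}, so appealing to it here does not buy you anything new.

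Your reduction idea is reasonable as a bookkeeping device, but the base case you leave open (``$Y_x^{(1)}$ and $Y_x^{(2)}$ have no common summand, show $x=0$'') is exactly the hard case, and nothing you have written addresses it without importing the result itself. If you want an honest proof, you would need to reproduce the sign-coherence/torsion-class argument from \cite{dij17} rather than gesture at it.
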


These results together suggest that the wall and chamber structure of an algebra gives a geometric model for the mutation quiver of an algebra. 

\cite[Definition 4.1]{Br_stle_2019} formalizes this idea by defining $\mathcal{D}$-generic paths as continuous paths in $\mathbb{R}^n$ moving between chambers passing only one wall at a time and doing so transversally. They proceed to demonstrate in \cite[Theorem 4.3]{Br_stle_2019} that these $\mathcal{D}$-generic paths correspond to mutations in the sense that a sequence of mutations between support $\tau$-tilting pairs induce a $\mathcal{D}$-generic path between the corresponding chambers, passing exactly one wall for each mutation. 

%\begin{corollary}
%	Fix an algebra $A$. If there is no $\mathcal{D}$-generic path from $C^\circ(T_1)$ to $C^\circ(T_2)$ passing only finitely many walls, then $T_1$ and $T_2$ lie in different components of $Q(s\tau\text{-tilt} A)$
%\end{corollary}

%\begin{proof}
%	The statement is the contrapositive of theorem
%\end{proof}
%\begin{thmIntro}[Theorem~\ref{thm:mainA}]\label{intro:thm:mainA}
%		Here is my first main theorem.
%\end{thmIntro}

%%%%%%%%%%%%%%%%%%%%%%%%%%%%%%%%%%%%%%

\section{Main results}

We begin this section by studying the techniques we need to prove Theorem \ref{k2-reduction}. We then give some corollaries to this result and lastly discuss algebras with more than two connected components in their mutation quivers.

We remark that both the results and proofs from this section closely follow the master thesis of the author, \cite{terland-master}.

\subsection{A reduction technique for 2-term pre-silting complexes}\label{technique_section}

Before giving the details on our reduction method, we motivate it by the following example.

\begin{example}
	
Consider the algebra $A = kQ/r^3$, where $Q$ is the quiver below.
% https://q.uiver.app/?q=WzAsNCxbMCwxLCIxIl0sWzEsMCwiNCJdLFsxLDIsIlxcYnVsbGV0Il0sWzIsMSwiMiJdLFswLDFdLFswLDJdLFsyLDNdLFsxLDNdLFsxLDJdXQ==
\[\begin{tikzcd}
	& 3 \\
	1 && 2 \\
	& 4
	\arrow[from=2-1, to=1-2]
	\arrow[from=2-1, to=3-2]
	\arrow[from=3-2, to=2-3]
	\arrow[from=1-2, to=2-3]
	\arrow[from=1-2, to=3-2]
\end{tikzcd}\]

There is an obvious equivalence between the additive categories $\add(P(1) \oplus P(2))$ and $\add K_2$, where we by $K_2$ mean the Kronecker algebra.

Since $\tau$-tilting theory can be studied using $2$-term pre-silting complexes, we can transport $\tau$-rigid modules from $K_2$ to $\tau$-rigid modules of $A$ via this equivalence. In fact, for any support $\tau$-tilting pair $T = (M,P)$ of $K_2$ with $g$-vector $v = (v_1,v_2)$, the equivalence between $\add(P(1) \oplus P(2))$ and $\add K_2$ allows us to conclude that there is a $\tau$-rigid pair $T_P = (M_A,P_A)$ over $A$ with $g$-vector $(v_1,v_2,0,0)$. Also, $T$ and $T_P$ both has the same number of indecomposable summands. Thus we can use the $\tau$-tilting theory of $K_2$ to better understand the $\tau$-tilting theory of $A$.

Consider now instead the algebra $B = kQ'/r^3$ with $Q'$ as drawn below.
\[
% https://tikzcd.yichuanshen.de/#N4Igdg9gJgpgziAXAbVABwnAlgFyxMJZABgBoBGAXVJADcBDAGwFcYkRyQBfU9TXfIRQAmCtTpNW7Yd14gM2PASLlSxcQxZtEIAMyy+iwStLCNk7SAAs3cTCgBzeEVAAzAE4QAtkjIgcEEjCPG6ePoii-oGI5CEgHt5IujQBSLFyCeF+qYi6cZlBKdF5GWG+ReUgjBAQaERkrkxwMOKM9ABGMIwACvxKQiDuWA4AFji2XEA
% https://tikzcd.yichuanshen.de/#N4Igdg9gJgpgziAXAbVABwnAlgFyxMJZABgBoBGAXVJADcBDAGwFcYkRyQBfU9TXfIRQAmCtTpNW7Yd14gM2PASLlSxcQxZtEIAMyy+iwStLCNk7SAAs3cTCgBzeEVAAzAE4QAtkjIgcEEjCPG6ePoii-oGI5CEgHt5IujQBSLFyCeF+qYi6cZlBKdF5GWG+ReUgjBAQaETCAOxkrkxwMOKM9ABGMIwACvxKQlUwrji2XEA
\begin{tikzcd}
	& 3 \arrow[rd] \arrow[dd] &   \\
	1 \arrow[ru] \arrow[rd] \arrow[loop, distance=2em, in=215, out=145] &                         & 2 \\
	& 4 \arrow[ru]            &  
\end{tikzcd}\]

Now, there is no equivalence between $\add(P_B(1) \oplus P_B(2))$ and $\add K_2$, as we have non-trivial endomorphisms of $P_B(1)$. Although the situation is not nearly as nice as when this equivalence hold, we later show that the reduction technique in Lemma \ref{inherit-tech} allows us to transport some $2$-term pre-silting complexes over $K_2$ to $2$-term pre-silting complexes over $B$, meaning that the $\tau$-tilting theory of $K_2$ still can give valuable information about the $\tau$-tilting theory of $B$:

\end{example}

We now present the reduction technique. Fix two algebras $\Lambda$ and $\Gamma$. Fix a pair of basic projective $\Lambda$-modules, not necessarily indecomposable, $(P_\Lambda,Q_\Lambda)$ and a pair of projective $\Gamma$-modules, not necessarily indecomposable, $(P_\Gamma,Q_\Gamma)$.

We are interested in the class $\mathcal{C}$ of 2-term pre-silting complexes on the form \[P' \rightarrow Q'\] where $P'$ lies in $\add P_\Lambda$ and $Q'$ lies in $\add Q_\Lambda$. Assume that there exists an additive functor \[F:\add (P_\Lambda \oplus Q_\Lambda) \to \add(P_\Gamma \oplus Q_\Gamma)\] with the following properties. 
	
\begin{enumerate}
	\item[(F1)] $F(P_\Lambda) = P_\Gamma$
	\item[(F2)] $F(Q_\Lambda) = Q_\Gamma$
	\item[(F3)] $F$ restricted to $\Hom_\Lambda(P_\Lambda,Q_\Lambda)$, considered as a function, is surjective.
	\item[(F4)] For any $P$ in $\add (P_\Lambda \oplus Q_\Lambda)$, we require $g^{F(P)}$ to equal $g^P$ in all coordinates where they are both defined and be $0$ everywhere else, meaning that $F$ essentially respects $g$-vectors.
\end{enumerate}

Concerning the last requirement above, note that $\Gamma$ and $\Lambda$ need not have the same number of simple modules. Thus demanding the two $g$-vectors to be equal would be too strict. We can now state and easily prove the reduction technique.

\begin{lemma}\label{inherit-tech}
	With the setup described above, let $\mathbb{P} = P' \xrightarrow{\phi} Q'$ be a 2-term pre-silting complex over $\Lambda$ lying in $\mathcal{C}$. Then \[F(\mathbb{P}): F(P') \xrightarrow{F(\phi)} F(Q')\] is a $2$-term pre-silting complex over $\Gamma$. 
	
	%In particular, the co-kernel $H^0(\mathbb{P})$ is a $\tau$-rigid $\Gamma$-module.
	
	%The $g$-vector of $\mathbb{P}$ agree with $g^M$ on all coordinates shared via $F$, and are zero everywhere else. 
\end{lemma}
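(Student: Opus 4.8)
The plan is to show directly that $\Hom_{K^b(\add\Gamma)}(F(\mathbb{P}),F(\mathbb{P})[1]) = 0$, using the fact that for 2-term complexes this $\Hom$ group in the homotopy category has a very concrete description in terms of module maps modulo those factoring through the differential. Concretely, for a 2-term complex $\mathbb{Q} = Q^{-1} \xrightarrow{\psi} Q^0$ over an algebra $\Gamma$, an element of $\Hom_{K^b(\add\Gamma)}(\mathbb{Q},\mathbb{Q}[1])$ is represented by a map $h\colon Q^0 \to Q^{-1}$, and it is null-homotopic exactly when $h = s\psi$ for some $s\colon Q^0 \to Q^{-1}$... wait, more precisely $\mathbb{Q}[1]$ has $Q^{-1}$ in degree $0$ and $Q^0$ in degree... let me be careful: a chain map $\mathbb{Q} \to \mathbb{Q}[1]$ is a single map $Q^0 \to Q^{-1}$ (the only possibly-nonzero component), with no chain-map condition beyond what degree reasons force, and it is null-homotopic iff it factors as $Q^0 \xrightarrow{\psi'} \cdots$; the upshot (this is standard, and implicit in \cite[Section 3]{tau}) is that $\Hom_{K^b(\add\Gamma)}(\mathbb{Q},\mathbb{Q}[1]) \cong \Hom_\Gamma(Q^0, Q^{-1}) / (\psi \circ \Hom_\Gamma(Q^0,Q^0) + \Hom_\Gamma(Q^{-1},Q^{-1}) \circ \psi)$. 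So rigidity of $\mathbb{P} = P' \xrightarrow{\phi} Q'$ says: every $h\colon Q' \to P'$ can be written as $\phi\circ a + b\circ\phi$ with $a \in \End_\Lambda(Q')$... hold on, $h$ should go $Q' \to P'$? The complex is $P' \to Q'$, so degree $-1$ part is $P'$ and degree $0$ part is $Q'$; a map to the shift is $Q' \to P'$. Good. Rigidity: every $h\colon Q' \to P' $ equals $\phi\circ a + b\circ\phi$ for some $a\colon Q'\to Q'$, $b\colon P'\to P'$.

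Now I would argue as follows. Since $F(P')\in\add P_\Gamma$ and $F(Q')\in\add Q_\Gamma$, I need: every $h'\colon F(Q')\to F(P')$ is homotopic to zero through $F(\phi)$. The key observation is that $F$ restricted to the relevant Hom-spaces is \emph{full}: the map $F\colon \Hom_\Lambda(P_\Lambda,Q_\Lambda)\to\Hom_\Gamma(P_\Gamma,Q_\Gamma)$ is surjective by (F3), and combined with (F1), (F2) and additivity of $F$ this gives surjectivity of $F\colon\Hom_\Lambda(P',Q')\to\Hom_\Gamma(F(P'),F(Q'))$ for all $P'\in\add P_\Lambda$, $Q'\in\add Q_\Lambda$ (pick matrix presentations and use that $F$ sends the corresponding idempotents/inclusions/projections to idempotents/inclusions/projections because $F$ is additive). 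By the same reasoning $F\colon\End_\Lambda(Q')\to\End_\Gamma(F(Q'))$ and $F\colon\End_\Lambda(P')\to\End_\Gamma(F(P'))$ are surjective (here I may additionally need (F3)-type surjectivity for $\Hom(P_\Lambda,P_\Lambda)$ and $\Hom(Q_\Lambda,Q_\Lambda)$ — this is a point to check; if the differentials only ever appear in the combination $\phi\circ a + b\circ\phi$ it might suffice to control just one side). Given $h'\colon F(Q')\to F(P')$, lift it to $h\colon Q'\to P'$ with $F(h)=h'$; by presilting of $\mathbb{P}$ write $h=\phi\circ a+b\circ\phi$; apply $F$ (functorial!) to get $h'=F(h)=F(\phi)\circ F(a)+F(b)\circ F(\phi)$, which exhibits the corresponding chain map $F(\mathbb{P})\to F(\mathbb{P})[1]$ as null-homotopic. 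Hence $F(\mathbb{P})$ is rigid, i.e. presilting.

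One cleanliness point: I should confirm $F(\mathbb{P})$ is genuinely a 2-term complex over $\Gamma$, i.e. that $F(P')$ and $F(Q')$ are projective $\Gamma$-modules and $F(\phi)$ is a $\Gamma$-module map — this is immediate since $F$ lands in $\add(P_\Gamma\oplus Q_\Gamma)\subseteq\add\Gamma$ and is a functor. Note (F4) is not actually needed for this lemma — it is the hypothesis that makes the \emph{conclusion useful} (it pins down the $g$-vector of $F(\mathbb{P})$ and hence its indecomposable-summand count), but it plays no role in proving rigidity; I would remark on this.

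The main obstacle I anticipate is the surjectivity/fullness step: upgrading (F3), which is stated only for the indecomposable-free "building block" pair $(P_\Lambda,Q_\Lambda)$, to surjectivity of $F$ on $\Hom_\Lambda(P',Q')$ for arbitrary $P'\in\add P_\Lambda$, $Q'\in\add Q_\Lambda$, and — more delicately — deciding whether we also need surjectivity of $F$ on the endomorphism spaces $\End_\Lambda(P')$ and $\End_\Lambda(Q')$ in order to lift the homotopy data $a,b$ and not just the map $h$. Reading the homotopy formula $h = \phi\circ a + b\circ\phi$, what we actually get after applying $F$ is $F(h) = F(\phi)\circ F(a) + F(b)\circ F(\phi)$, and since we only need to exhibit \emph{some} homotopy for $F(h)$, we do not need to lift $a,b$ — they come for free by functoriality. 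So in fact only surjectivity of $F$ on $\Hom_\Lambda(P',Q')$ is required, and that follows from (F1),(F2),(F3) and additivity by a short matrix argument. I would write that argument out carefully, as it is the one nonformal ingredient.
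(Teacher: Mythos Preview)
Your overall strategy is exactly the paper's: lift the given morphism via (F3), use rigidity of $\mathbb{P}$ to produce a homotopy, then push the homotopy down by functoriality of $F$. Your side remarks are also correct --- (F4) plays no role in the rigidity argument, and (F3) must be upgraded from the building blocks $(P_\Lambda,Q_\Lambda)$ to arbitrary $(P',Q')\in\add P_\Lambda\times\add Q_\Lambda$ by additivity; the paper silently uses this.

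There is, however, a concrete error in the direction of your maps, and it is not cosmetic. With cohomological conventions and $(\mathbb{P}[1])^n=\mathbb{P}^{n+1}$, a chain map $\mathbb{P}\to\mathbb{P}[1]$ has a single nonzero component $h\colon \mathbb{P}^{-1}\to(\mathbb{P}[1])^{-1}=\mathbb{P}^{0}$, i.e.\ $h\colon P'\to Q'$, not $Q'\to P'$. Accordingly the null-homotopy reads $h=\phi\circ g_1+g_2\circ\phi$ with $g_1\in\End_\Lambda(P')$ and $g_2\in\End_\Lambda(Q')$; your version with $h\colon Q'\to P'$ and $a\in\End_\Lambda(Q')$, $b\in\End_\Lambda(P')$ does not type-check (note $\phi\circ a$ would land in $Q'$, not $P'$). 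The direction matters because (F3) asserts surjectivity of $F$ on $\Hom_\Lambda(P_\Lambda,Q_\Lambda)$, which is precisely what lets you lift $h'\colon F(P')\to F(Q')$; your reversed setup would instead need surjectivity on $\Hom_\Lambda(Q_\Lambda,P_\Lambda)$, which is not among the hypotheses. Once you flip the arrow, your argument coincides with the paper's verbatim.
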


\begin{proof}
	It is sufficient to prove that \[F(\mathbb{P}) = F(P') \xrightarrow{F(\phi)} F(Q')\] is rigid as a complex, considered as an object in $K^b(\add \Gamma)$. Thus we seek for any $h:F(P') \to F(Q')$ two maps $g_2$ and $g_1$ such that $F(\phi) \circ g_1 + g_2 \circ F(\phi) = h$, as in the diagram drawn below. 
	% https://tikzcd.yichuanshen.de/#N4Igdg9gJgpgziAXAbVABwnAlgFyxMJZARgBoAGAXVJADcBDAGwFcYkQAxACgAUByAJQgAvqXSZc+QigBMFanSat23AIqCRYkBmx4CRMsQUMWbRJy7qho8bqlFypIzRPLz3ftYUwoAc3hEoABmAE4QALZIjiA4EEhkIIz0AEYwjDwSetKJMEE4IC5KZhYAOiVoABZY1lqhEUgAzDSxSHKKpipcZZXVmsFhkYjRLYhtSanpmfbmjLn5hR3mFX0gdYPDcYhNiSlpGXb6M3MFiVhgxVAQODg+J67FvgD6xCtr8c2bY7uTB9khWL4KvN2m4QE8ZCJKMIgA
	\[
	\begin{tikzcd}
		& F(P') \arrow[r, "F(\phi)"] \arrow[d, "h"] \arrow[ld, "g_1", dotted] & F(Q') \arrow[ld, "g_2"',dotted] \\
		F(P') \arrow[r, "F(\phi)"] & F(Q')                                                               &                         
	\end{tikzcd}
	\]
		
	By the F3, there exists a map $h'$ such that $F(h') = h$. Since $\mathbb{P}$ is rigid, there exists $g_1'$ and $g_2'$ such that $\phi \circ g_1' + g_2' \circ \phi = h'$. Letting $g_1 = F(g_1')$ and $g_2 = F(g_2')$, we see by functoriality that $F(\phi) \circ g_1 + g_2 \circ F(\phi) = h$ as wanted. 

\end{proof}

When applying the above lemma, some care must be taken. First, $H^0(F(\mathbb{P}))$ need not be indecomposable, even if $\mathbb{P}$ is and we will give an example of this later. Also, although we are not aware of any counterexamples, it can not be concluded that $F(\mathbb{P})$ is indecomposable even if $H^0(F(\mathbb{P}))$ is. In this case, the $g$-vector of $H^0(F(\mathbb{P}))$ cannot be naively computed by inspecting $\mathbb{P}$. Note however that if $\mathbb{P}$ is indecomposable, then $H^0(\mathbb{P})$ is indecomposable as well and its $g$-vector must be $g^\mathbb{P}$. 

We now apply Lemma \ref{inherit-tech} to the case where $\Lambda$ is the Kronecker algebra.

\begin{example}\label{K2example}
	
	Let $\Lambda = K_2$ in Lemma \ref{inherit-tech}, where $K_2$ is the Kronecker algebra, i.e the path algebra drawn below. Let $\Gamma$ be an algebra with ordered projectives $Q(i)$ where $i \in [1,\dots,n]$ and a pair of indecomposable projectives $(Q(2),Q(1))$ such that $hom_\Gamma(Q(2),Q(1)) = 2$. 
	
\[K_2 = k(\begin{tikzcd}
	1 
	\arrow[r, shift left = 0.7ex]
	\arrow[r, shift right = 0.7ex]
	& 2
\end{tikzcd})
\]

	We may then define a functor $F:\add K_2 \to  \add( Q(1) \oplus Q(2))$ by sending the $K_2$-module $P(1)$ to $Q(1)$ and  the $K_2$-module $P(2)$ to $Q(2)$. It is clear that morphisms in $\add K_2$ can be transported functorially to morphisms in $\add (Q(1) \oplus Q(2))$. We see that the requirements F1-F4 are satisfied by $F$.

	We now discuss the consequences of Lemma \ref{inherit-tech} in this scenario. Consider the family $\mathcal{C}$ of $\tau$-rigid modules in $\text{mod }K_2$ with projective presentations on the form $P(2)' \to P(1)'$ where $P(i)'$ lies in $\add P(i)$. These have $g$-vectors on the form $(i+1,-i)$ for integers $i \geq 0$. A finite portion of these vectors are drawn bold in the figure below, and a subset of the other $g$-vectors are drawn with dotted lines. Lemma \ref{inherit-tech} then allows us to conclude that there is a family of 2-term pre-silting objects over $\Gamma$ corresponding to $\tau$-rigid pairs with $g$-vectors on the form $(i+1,-i,0,\dots,0)$.
	
		\begin{figure}[h]
		\begin{center}
			\begin{tikzpicture}[>=stealth,scale=1,line cap=round,
				bullet/.style={circle,inner sep=1.5pt,fill}]
				%\draw[->] (-5,0) -- (5,0) node[right]{$x$};
				%\draw[thick,->] (0,0) -- (1,-1) node[above][$L$]
				%\draw[dashed,->] (0,0) -- (1,-1) node[above]{$$};
				%\draw foreach \X in {3,6}
				%{(\X,0.1) -- ++ (0,-0.2) node[below]{$\X$}};
				%\draw foreach \Y in {2,4}
				%{(0.1,\Y) -- ++ (-0.2,0) node[left]{$\Y$}};
				\foreach \X [count=\Y] in {(1,0),(2,-1),(3,-2),(4,-3)} 
				{\path  \X node(n\Y)[label=right:{$$}]{};
					\draw[thick,->,opacity = 1]  (0,0) -- (n\Y);}
				
				\foreach \X [count=\Y] in {(0,1),(1,-2),(2,-3),(3,-4),(-1,0),(0,-1),(-1,1)} 
				{\path  \X node(n\Y)[label=right:{$$}]{};
					\draw[dashed,->,opacity = 1]  (0,0) -- (n\Y);}
				
				%\foreach \X [count=\Y] in {(-1,0),(0,-1),(-2,1),(1,-3)} 
				%{\path  \X node(n\Y)[label=left:{$\X$}]{};
				%	\draw[dashed,->,opacity = 0.5]  (0,0) -- (n\Y);}
			\end{tikzpicture}
		\end{center}
		\caption{A finite portion of the $g$-vectors of $K_2$. }
		\label{k2-wall-and-chamber}
	\end{figure}
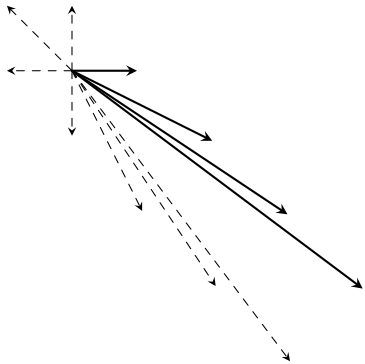
\end{example}

In general, the technique described above need not take indecomposable $\tau$-rigid modules to indecomposable $\tau$-rigid modules, as seen in the example below.

\begin{example}\label{example-decomposing-modules}
Let $A = kQ/\beta^2$ where $Q$ is the quiver below.

\[
\begin{tikzcd}
	1 & 2
	\arrow[from=1-1,to=1-2,"\alpha"]
	\arrow[loop,from = 1-2,to=1-2,"\beta"',distance=2em]
\end{tikzcd}
\]

$A$ is $\tau$-tilting finite, and we can explicitly compute its mutation quiver $Q(s\tau\text{-tilt} A)$ as shown in Figure \ref{mutation-quiver-example-decomposing-modules}. Since $\Hom_A(P(2),P(1))$ is $2$-dimensional we have by Lemma \ref{inherit-tech} a 2-term pre-silting complex with $g$-vector $(i,-i + 1)$ for any $i \geq 0$. Since there are only finitely many $\tau$-rigid modules over $A$, almost all of these given complexes must decompose. 

% https://q.uiver.app/?q=WzAsNixbMSwwLCIoUCgxKVxcb3BsdXMgUCgyKSwwKSJdLFswLDEsIihQKDIpLFAoMSkpIl0sWzIsMSwiKFAoMSkgXFxvcGx1cyBUKDEpKSwwKSJdLFsyLDIsIihTKDEpXFxvcGx1cyBUKDEpLDApIl0sWzIsMywiKFMoMSksUCgyKSkiXSxbMSw0LCIoMCxQKDEpXFxvcGx1cyBQKDIpKSJdLFswLDFdLFswLDJdLFsyLDNdLFszLDRdLFs0LDVdLFsxLDVdXQ==
\begin{figure}[h]
	\[\begin{tikzcd}
		& {(P(1)\oplus P(2),0)} \\
		{(P(2),P(1))} && {(P(1) \oplus T_1,0)} \\
		&& {(S(1)\oplus T_1,0)} \\
		&& {(S(1),P(2))} \\
		& {(0,P(1)\oplus P(2))}
		\arrow[from=1-2, to=2-1]
		\arrow[from=1-2, to=2-3]
		\arrow[from=2-3, to=3-3]
		\arrow[from=3-3, to=4-3]
		\arrow[from=4-3, to=5-2]
		\arrow[from=2-1, to=5-2]
	\end{tikzcd}\]
	\caption{The mutation quiver of the algebra in example \ref{example-decomposing-modules}, where $T_1$ is the $\tau$-rigid module with $g$-vector $(2,-1)$}
	\label{mutation-quiver-example-decomposing-modules}
\end{figure}
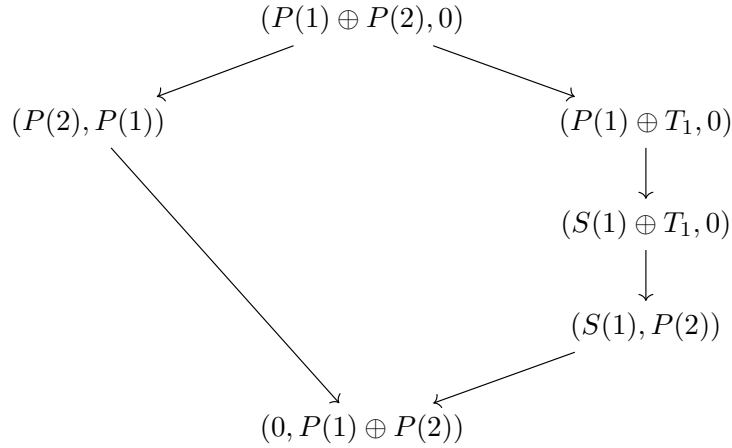

\end{example}

The following lemma is then of interest.

\begin{lemma}\label{indecomposability-condition}
	Let $\Lambda = K_2$ as in Example $\ref{K2example}$. Let $\Gamma$ be an algebra with $n$ simple modules and ordered projectives $Q(i)$, where $i \in [1,\dots,n]$, and a pair of projectives $(Q(2),Q(1))$ such that $hom_\Gamma(Q(2),Q(1)) = 2$. Let the projectives of $\Gamma$ be ordered such that $g^{Q_1} = (1,0,0,\dots,0)$ and $g^{Q_2} = (0,1,0,\dots,0)$. Let $F$ be the functor as in Example \ref{K2example}.

	We consider the class $\mathcal{C}$ of 2-term pre-silting complexes over $K_2$ coming from projective presentations of $\tau$-rigid indecomposable modules on the form \[P(2)' \to P(1)' \] where $P(i)'$ lies in $\add P(i)$.

	 Assume that $(1,-1,0,0,\dots,0)$ is not a $g$-vector of $\Gamma$. Then all complexes in $\mathcal{C}$ are transported to indecomposable 2-term pre-silting complexes over $\Gamma$ via $F$ as defined in Example $\ref{K2example}$.
\end{lemma}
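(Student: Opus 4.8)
The plan is to argue by contradiction, exploiting the $g$-vector bookkeeping. Suppose some complex $\mathbb{P} = P(2)' \to P(1)' \in \mathcal{C}$ is sent by $F$ to a decomposable $2$-term pre-silting complex $F(\mathbb{P})$ over $\Gamma$. Since $\mathbb{P}$ comes from the minimal projective presentation of an indecomposable $\tau$-rigid $K_2$-module, its $g$-vector is $(i+1,-i)$ for some integer $i \geq 0$, as recorded in Example \ref{K2example}. By (F4), $g^{F(\mathbb{P})} = (i+1,-i,0,\dots,0)$. If $F(\mathbb{P})$ decomposes as $\mathbb{R}_1 \oplus \mathbb{R}_2$ with both summands nonzero $2$-term pre-silting complexes, then $g^{F(\mathbb{P})} = g^{\mathbb{R}_1} + g^{\mathbb{R}_2}$, and each $\mathbb{R}_j$ is itself built (up to summands of the form $Q(\ell)[1]$ and $Q(\ell)$, i.e.\ shifted projectives and stalk projectives) out of $\add(Q(1) \oplus Q(2))$ in degrees $0$ and $-1$, since $F(P')$ lies in $\add Q(1)$ placed in degree $-1$ and $F(Q')$ lies in $\add Q(2)$ placed in degree $0$. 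Hence the $g$-vectors $g^{\mathbb{R}_1}, g^{\mathbb{R}_2}$ are supported on the first two coordinates; write them as $(a_1,b_1,0,\dots,0)$ and $(a_2,b_2,0,\dots,0)$ with $a_1+a_2 = i+1$, $b_1 + b_2 = -i$.

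Next I would pin down the possible $g$-vectors of indecomposable summands of $F(\mathbb{P})$. Because $\mathbb{P}$ is the minimal presentation $P(2)^{\oplus i} \to P(1)^{\oplus(i+1)}$ of an indecomposable, applying $F$ gives a complex $Q(2)^{\oplus i} \to Q(1)^{\oplus(i+1)}$; any indecomposable summand $\mathbb{R}_j$ of it is therefore a complex of the shape $Q(2)^{\oplus c_j} \to Q(1)^{\oplus d_j}$ (possibly with $c_j = 0$, giving a stalk $Q(1)^{\oplus d_j}$ in degree $-1$, i.e.\ $Q(1)[1]^{\oplus d_j}$, which forces $d_j = 1$ by indecomposability; or with $d_j = 0$, giving $Q(2)^{\oplus c_j}$ in degree $0$, forcing $c_j = 1$). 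Thus each nonzero indecomposable summand has $g$-vector either $(0,1,0,\dots,0)$, or $(-1,0,0,\dots,0)$ (that is $g^{Q(1)[1]}$), or $(d,-c,0,\dots,0)$ with $c,d \geq 1$ coming from a genuine presentation. The key point: summing these to get $(i+1,-i,0,\dots,0)$ with first coordinate strictly positive and second coordinate $\leq 0$, and with at least two nonzero summands, one checks that the only way to produce a negative-or-zero second coordinate while keeping a positive first coordinate is to use exactly one copy of $g^{Q(2)} = (0,1,0,\dots,0)$ together with one copy of $g^{Q(1)[1]} = (-1,0,0,\dots,0)$, whose sum is precisely $(-1,1,0,\dots,0)$ — which has the wrong sign pattern — or to combine $(0,1,\dots)$-type and $(d,-c,\dots)$-type summands. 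I would run through the (very short) case analysis on the number and types of summands; in every case the arithmetic $a_1 + a_2 = i+1 > 0$, $b_1 + b_2 = -i \leq 0$ forces, after cancelling, a decomposition that contains a summand with $g$-vector $(1,-1,0,\dots,0)$ — contradicting the hypothesis that $(1,-1,0,\dots,0)$ is not a $g$-vector of $\Gamma$.

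To make that last deduction rigorous I would use the fact from \cite{tau} (Theorem 5.1, as cited) that the indecomposable summands of a $2$-term pre-silting complex have distinct $g$-vectors and that each such $g$-vector is the $g$-vector of a $\tau$-rigid pair of $\Gamma$; so if the case analysis ever produces an indecomposable summand whose $g$-vector is $(1,-1,0,\dots,0)$, we have exhibited $(1,-1,0,\dots,0)$ as a $g$-vector of $\Gamma$, the desired contradiction. The main obstacle is precisely the combinatorial case analysis in the previous paragraph: I must argue carefully that, given the shape constraint ``each summand is $Q(2)^{\oplus c} \to Q(1)^{\oplus d}$'', a nontrivial decomposition summing to $(i+1,-i,0,\dots,0)$ cannot avoid a summand with $g$-vector $(1,-1,0,\dots,0)$. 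The cleanest route is probably to observe that $\mathbb{P}$, being minimal with indecomposable cohomology, maps under $F$ to a complex whose own minimal model still has the form $Q(2)^{\oplus i} \to Q(1)^{\oplus (i+1)}$ (no summands cancel since $\hom_\Gamma(Q(2),Q(1)) = 2$ and the transported map is still given by a ``generic'' pair of morphisms), and then to note that a proper direct-sum decomposition of such a complex with the $g$-vector constraint would have to split off either $Q(2)$ (giving second coordinate $+1$, impossible to compensate back down while the first coordinate grows) or a sub-presentation $Q(2)^{\oplus c} \to Q(1)^{\oplus(c+1)}$ with $0 \leq c < i$, whose complementary summand then has $g$-vector $(i - c, -(i-c) + 1, 0, \dots, 0)$; iterating, the minimal such complementary piece has $g$-vector $(1,-1+1,\dots) = (1,0,\dots,0)$ or $(1,-1,0,\dots,0)$, and the genuinely $2$-term indecomposable piece that must appear has $g$-vector $(1,-1,0,\dots,0)$, contradiction. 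I would present this as the single substantive step and keep the rest as routine bookkeeping.
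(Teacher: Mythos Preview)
Your proof has a genuine gap in the ``very short case analysis'' you defer. Pure $g$-vector arithmetic on a single $F(\mathbb{P}_i)$ does not force a summand with $g$-vector $(1,-1,0,\dots,0)$. An indecomposable summand of $F(\mathbb{P}_i) = Q(2)^{\oplus i} \to Q(1)^{\oplus(i+1)}$ is a complex $Q(2)^{\oplus c} \to Q(1)^{\oplus d}$, but there is no reason a priori that $d = c+1$; that is precisely the Kronecker pattern you are trying to establish over $\Gamma$. For instance, nothing you have written rules out $F(\mathbb{P}_3)$ (with $g$-vector $(4,-3,0,\dots,0)$) splitting into summands with $g$-vectors $(2,-1,0,\dots,0)$ and $(2,-2,0,\dots,0)$: both have the required shape, their sum is correct, they are linearly independent, and neither is $(1,-1,0,\dots,0)$. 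Your final iteration (``split off a sub-presentation $Q(2)^{\oplus c} \to Q(1)^{\oplus(c+1)}$'') silently assumes every summand already has a Kronecker-type $g$-vector, which is the conclusion; and incidentally the complement would then have $g$-vector $(i-c,-(i-c),0,\dots,0)$, not $(i-c,-(i-c)+1,0,\dots,0)$.

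The missing ingredients, which the paper supplies, are twofold. First, argue by induction on $i$, taking $j$ minimal with $F(\mathbb{P}_j)$ decomposable. Second, use that over $K_2$ the direct sum $\mathbb{P}_{j-1} \oplus \mathbb{P}_j$ is itself pre-silting, hence so is $F(\mathbb{P}_{j-1}) \oplus F(\mathbb{P}_j)$ by Lemma~\ref{inherit-tech}. Now either the (indecomposable, by minimality of $j$) complex $F(\mathbb{P}_{j-1})$ is a summand of $F(\mathbb{P}_j)$, forcing the complement to have $g$-vector exactly $(1,-1,0,\dots,0)$; or $F(\mathbb{P}_{j-1}) \oplus F(\mathbb{P}_j)$ has at least three pairwise non-isomorphic indecomposable summands, all with $g$-vectors in the $2$-dimensional span of $e_1,e_2$. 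Completing to a $2$-term silting object then yields a $G$-matrix of rank at most $n-1$, contradicting its invertibility. The key point is that it is \emph{linear independence} of $g$-vectors of summands of a pre-silting complex (via invertibility of $G$-matrices), not mere distinctness, that bounds the number of summands, and this constraint only becomes usable once you bring the adjacent complex $\mathbb{P}_{j-1}$ into the picture.
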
 

\begin{proof}
	We are guaranteed a sequence of (not necessarily basic nor indecomposable) 2-term pre-silting complexes $\{\mathbb{P}_i\}_{i \in \mathbb{N}}$ with $g$-vectors on the form $(i+1,-i,0,0,\dots,0)$.
	
	$\mathbb{P}_0$ is concentrated in degree $0$ where its homology is a projective indecomposable module. We proceed by contradiction. Assume that $\mathbb{P}_j$ decomposes and that $\mathbb{P}_k$ is indecomposable for all $k < j$. We know by \ref{inherit-tech} that $\mathbb{P}_{j-1} \oplus \mathbb{P}_j$ is 2-term pre-silting. If $\mathbb{P}_{j-1}$ is a summand of $\mathbb{P}_j$, then $\mathbb{P}_j = \mathbb{P}_{j-1} \oplus \mathbb{X}$, where $\mathbb{X}$ must have $g$-vector $(1,-1,0,0,\dots,0)$, against our assumptions.
	
	Since $\mathbb{P}_{j-1}$ is not a summand of $\mathbb{P}_j$, we know that $\mathbb{P}_{j-1} \oplus \mathbb{P}_j$ has at least three pairwise non-isomorphic indecomposable rigid summands, say $\mathbb{X}_1,\mathbb{X}_2$ and $\mathbb{X}_3$. They must all have indices on the form $(x,y,0,0,\dots,0)$, thus the span of $(g^{\mathbb{X}_1},g^{\mathbb{X}_2},g^{\mathbb{X}_3})$ in $\mathbb{Z}^n$ is at most two-dimensional. As $G$-matrices are invertible, we can now derive a contradiction. In particular, we have that any $G$-matrix $G^T$ where $T$ is a support $\tau$-tilting pair is an invertible integer matrix. If we let $T$ be for example a support $\tau$ tilting pair whose corresponding $2$-term silting object has $\mathbb{X}_1 \oplus \mathbb{X}_2 \oplus \mathbb{X}_3$ as a summand, then $G^T$ has rank at most $n-1$, a contradiction.

\end{proof}

%%%%%%%%%%%%%%%%%%%%%%%%%%%%%%%%%%%%%%

\subsection{Some algebras with at most two components in their mutation quivers}

We begin by stating the main theorem of this section. 

\begin{theorem}\label{k2-reduction}
	Let $A$ be the algebra $kQ/r^3$ where $Q$ is the quiver	
	\[\begin{tikzcd}
		1 
		\arrow[r,bend left,shift right = 0.5ex, "\beta"]
		\arrow[r,bend left, shift left = 2ex, "\alpha"]
		& 2 \arrow[l,bend left,shift right = 0.5ex,"\gamma"]
		\arrow[l,bend left,shift left = 2ex, "\delta"]  \\
	\end{tikzcd}
	\]
	
	Then $Q(s\tau\text{-tilt } A)$ has exactly two components.
	
\end{theorem}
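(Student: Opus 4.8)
The plan is to use the reduction technique from Lemma~\ref{inherit-tech} to produce two infinite families of $g$-vectors in the wall-and-chamber structure of $A$, separating $\mathbb{R}^2$ into (at least) two regions, and then to show that no further components can occur. Concretely, $A$ has two simple modules, so its $G$-matrices live in $\mathbb{R}^2$; the mutation quiver is dual to the chamber structure on the plane, and connected components of the mutation quiver correspond to connected components of the union of open cones $C^\circ(T)$ over all support $\tau$-tilting pairs $T$. First I would compute the projective modules $P(1), P(2)$ of $A = kQ/r^3$ and verify that $\hom_A(P(2),P(1)) = 2$ and $\hom_A(P(1),P(2)) = 2$, so that Example~\ref{K2example} applies in both directions. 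Applying Lemma~\ref{inherit-tech} with $\Lambda = K_2$ and the functor $F$ sending $P(1)_{K_2} \mapsto P(1)_A$, $P(2)_{K_2}\mapsto P(2)_A$ gives a family of $2$-term presilting complexes over $A$ with $g$-vectors $(i+1,-i)$ for $i\geq 0$; applying it with the roles of $1$ and $2$ swapped gives a family with $g$-vectors $(-i, i+1)$ for $i \geq 0$. Together with the "trivial" presilting complexes (shifted projectives) these account for $g$-vectors accumulating toward the rays $\mathbb{R}^+(1,-1)$ and $\mathbb{R}^+(-1,1)$ from opposite sides.

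Next I would organize the plane. The two rays $\mathbb{R}^+(1,-1)$ and $\mathbb{R}^+(-1,1)$ form a single line $\ell$ through the origin; I expect $\ell$ itself to be a wall (the stability space of a module built from the $K_2$-type behaviour, analogous to the regular representations of the Kronecker algebra, whose $g$-vectors accumulate there). The cones coming from the two families above, together with the shifted/unshifted projective cones and the finitely many remaining chambers, should be shown to tile the two closed half-planes determined by $\ell$, in such a way that: (i) within each open half-plane the chambers are connected to each other through walls that are genuinely crossed by mutations (here I would invoke \cite[Theorem 4.3]{Br_stle_2019} relating $\mathcal{D}$-generic paths to mutation sequences, so that a path staying in one half-plane and crossing only the "combinatorial" walls realizes a sequence of mutations); and (ii) no chamber has part of its closure on both sides of $\ell$, i.e. $\ell$ is not crossed transversally by any wall-crossing corresponding to a mutation. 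Point (ii) is what forces at least two components; point (i) is what caps the count at exactly two.

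The main obstacle, and where the real work lies, is establishing that $\ell$ genuinely separates the chamber structure — equivalently, that there is no support $\tau$-tilting pair whose cone meets both open half-planes, and dually no mutation that crosses $\ell$. For this I would analyze the $\tau$-rigid modules of $A$ directly: classify (or at least constrain) the possible $g$-vectors, perhaps by using the covering/stretching relationship between $A$ and $K_2$ more carefully, or by exhibiting an explicit semistable module (a two-dimensional regular-type module) whose stability wall is exactly $\ell$ and showing every other wall is confined to one side. A useful intermediate step would be Lemma~\ref{indecomposability-condition}: checking that $(1,-1)$ is \emph{not} a $g$-vector of $A$ (it would correspond to a module that does not exist, or to a decomposable complex) both feeds into that lemma and is the kind of obstruction that prevents the two families from "meeting" across $\ell$. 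Once $\ell$ is shown to be a separating wall that is never crossed, together with the fact that each side is connected (a finite chamber-count argument on each half-plane, again via \cite[Theorem 4.3]{Br_stle_2019}), we conclude $Q(s\tau\text{-tilt }A)$ has exactly two components.
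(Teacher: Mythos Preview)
Your overall strategy—reduce to $K_2$ via Lemma~\ref{inherit-tech}, produce the families of $g$-vectors $(i+1,-i)$ and $(-i,i+1)$, invoke Lemma~\ref{indecomposability-condition} after checking $(1,-1)$ is not a $g$-vector, and then argue geometrically in $\mathbb{R}^2$—matches the paper's approach. But there is a genuine gap in how you handle the half-plane $x+y<0$.

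Both of your infinite families lie entirely in the half-plane $x+y>0$: indeed $(i+1)+(-i)=1$ and $(-i)+(i+1)=1$ for every $i$. Together with the cone of $(A,0)$ they tile that open half-plane, and this is exactly how the paper shows that the component of $(A,0)$ covers $\{x+y>0\}$. On the other side, the shifted projectives give you only the single cone $C(0,A)$; you have no mechanism for producing the infinitely many further chambers that must exist in $\{x+y<0\}$, and your remark about ``finitely many remaining chambers'' and a ``finite chamber-count argument on each half-plane'' cannot be correct, since each half-plane contains infinitely many chambers. The paper closes this gap with a step you do not mention: the algebra $A$ is isomorphic to its opposite, and the standard duality in $\tau$-tilting theory (right mutations computed as left mutations over $A^{\mathrm{op}}$, with $g$-vectors negated) then transports the entire picture from $\{x+y>0\}$ to $\{x+y<0\}$, showing that the component of $(0,A)$ tiles the second half-plane.

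A second, smaller point: the paper does not argue by exhibiting $\ell$ as a wall and then showing no mutation crosses it. Instead it shows directly that the cones it has constructed have closure equal to all of $\mathbb{R}^2$, so there is \emph{no room} for any further support $\tau$-tilting pair; the separation into two components then follows because each family is a single mutation chain emanating from $(A,0)$ (respectively $(0,A)$), and these chains never meet. Your plan to prove $\ell$ is a genuine stability wall and then argue connectivity on each side is plausible but more work than needed; once you add the missing duality step, the tiling argument gives both ``at most two'' and ``exactly two'' at once.
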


The following corollary shows that Theorem \ref{k2-reduction} allows us to understand the $\tau$-tilting theory of quotient algebras of $A$. In particular, we immediately obtain a generalization of \cite[Theorem 6.17]{dij17}, where a specific quotient of $A$ is studied. Indeed, both the statement and proof of \ref{k2-reduction} is inspired by aforementioned result of Demonet, Iyama and Jasso.

\begin{corollary}\label{k2-reduction-quotient}
	Let $A$ be as in Theorem \ref{k2-reduction}, and let $I$ be an ideal contained in the radical of $A$. Then $Q(s\tau\text{-tilt } A/I)$ has at most two components.
	
\end{corollary}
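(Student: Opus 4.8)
The plan is to deduce the corollary from Theorem~\ref{k2-reduction} by comparing the mutation quiver of $A$ with that of $A/I$, using the geometry of $g$-vectors. Write $\Lambda = A/I$. Since $I \subseteq \underline{r}$, there is a canonical algebra epimorphism $A \to \Lambda$, and restriction of scalars realizes $\mods \Lambda$ as a full subcategory of $\mods A$ closed under submodules, quotients and extensions; in particular $|A| = |\Lambda| = 2$, so both mutation quivers are $2$-regular graphs. The key point I would establish is that every support $\tau$-tilting pair over $\Lambda$ gives rise, via its $g$-vector, to a chamber in the wall-and-chamber structure of $A$, hence lies ``on top of'' a support $\tau$-tilting pair of $A$: concretely, a $\tau$-rigid $\Lambda$-module is $\tau$-rigid over $A$ need not hold, but what is true and what I would use is that the cones $C^\circ(T)$ for $T \in s\tau\text{-tilt}\,\Lambda$ are a union of chambers of $A$ (this follows from \cite[Proposition 3.15]{Br_stle_2019} together with the fact that every $v$-stable $\Lambda$-module is a $v$-stable $A$-module, so every wall of $\Lambda$ is a wall of $A$; thus $A$ has \emph{at least} as many walls, and each $\Lambda$-chamber is cut into $A$-chambers). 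This yields a map on the level of $g$-vector fans refining $\Lambda$'s fan by $A$'s.

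Next I would turn this into a statement about connected components. By \cite[Theorem 4.3]{Br_stle_2019}, a path of mutations in $Q(s\tau\text{-tilt}\,\Lambda)$ corresponds to a $\mathcal D$-generic path in $\mathbb{R}^2$ crossing the walls of $\Lambda$ one at a time; since every wall of $\Lambda$ is also a wall of $A$, such a path can be perturbed to a $\mathcal D$-generic path for $A$ as well, and hence lifts to a path of mutations in $Q(s\tau\text{-tilt}\,A)$ between the corresponding $A$-chambers. Concretely: if $T, T' \in s\tau\text{-tilt}\,\Lambda$ lie in the same component of $Q(s\tau\text{-tilt}\,\Lambda)$, pick any $A$-chamber inside $C^\circ(T)$ and any inside $C^\circ(T')$; these are connected in $Q(s\tau\text{-tilt}\,A)$. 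So there is a well-defined surjection from the set of components of $Q(s\tau\text{-tilt}\,A)$ onto the set of components of $Q(s\tau\text{-tilt}\,\Lambda)$ — more precisely, a map sending a component of $Q(s\tau\text{-tilt}\,\Lambda)$ to the component of $Q(s\tau\text{-tilt}\,A)$ containing (any of) its lifted chambers, and one checks this is well-defined and that distinct components of $\Lambda$ sitting in the same component of $A$ collapse, i.e.\ the number of components of $Q(s\tau\text{-tilt}\,\Lambda)$ is at most the number of components of $Q(s\tau\text{-tilt}\,A)$. By Theorem~\ref{k2-reduction} the latter is $2$, giving the claim.

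I expect the main obstacle to be making the ``lifting paths of mutations'' step fully rigorous: a $\mathcal D$-generic path for $\Lambda$ avoids the (fewer) walls of $\Lambda$ but may run along or through walls of $A$, so it must be perturbed, and one needs to argue that such a perturbation exists inside the union of $A$-chambers covering the traversed $\Lambda$-chambers, and that the endpoints can be chosen in prescribed $A$-chambers — this requires knowing the walls of $A$ within a given $\Lambda$-chamber form a locally finite, tame arrangement (true here since $n=2$, where walls are rays and a chamber is an angular sector, so there are only finitely many $A$-walls inside each $\Lambda$-chamber, or at worst they accumulate only at the boundary). An alternative, possibly cleaner route avoiding wall-and-chamber technology would be a direct mutation-combinatorial argument: show that if a $\tau$-rigid $\Lambda$-pair $T$ is not $\tau$-rigid over $A$, one can still track it through the $G$-matrix fan, but I think the geometric argument is the most transparent given the tools already assembled in the excerpt. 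A final remark I would include: the corollary cannot be upgraded to ``exactly two'' in general, since for $\tau$-tilting finite quotients (e.g.\ $kQ/\underline r^2$) the mutation quiver is connected — which is consistent with ``at most two''.
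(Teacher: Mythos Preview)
Your approach shares the paper's key observation---walls of $\Lambda=A/I$ are walls of $A$---but you run the path-lifting in the wrong direction, and this creates both a technical headache and a logical gap. You try to take a $\mathcal D$-generic path for $\Lambda$ and perturb it into a $\mathcal D$-generic path for $A$; since $A$ has \emph{more} walls, this is the hard direction, and you correctly flag it as the main obstacle. But even granting it, what you obtain is only that the assignment ``$\Lambda$-component $\mapsto$ $A$-component of a lifted chamber'' is \emph{well-defined}. To bound the number of $\Lambda$-components by the number of $A$-components you need this map to be \emph{injective}, and your sentence about components ``collapsing'' points the inequality the wrong way. Injectivity would require: if two lifted $A$-chambers are in the same $A$-component, then the original $\Lambda$-pairs are in the same $\Lambda$-component---and that is precisely the \emph{easy} direction you never invoke.

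The paper's proof avoids the detour entirely by using only the easy direction. Given any support $\tau$-tilting pair $T$ over $\Lambda$, pick a point $x\in C^\circ(T)$ lying in some $A$-chamber $C(T')$. By Theorem~\ref{k2-reduction} there is a $\mathcal D$-generic path $\gamma$ in the wall-and-chamber structure of $A$ from $C(A,0)$ or $C(0,A)$ to $C(T')$, crossing finitely many $A$-walls. Because every $\Lambda$-wall is an $A$-wall, $\gamma$ is automatically $\mathcal D$-generic for $\Lambda$ and crosses only finitely many $\Lambda$-walls; hence $T$ lies in the component of $(\Lambda,0)$ or $(0,\Lambda)$. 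No perturbation, no map between component sets, no injectivity check. The moral: descend paths from the algebra with more walls to the one with fewer, not the other way round.
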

 
\begin{proof}
	Walls of $B = A/I$ are by \cite[Lemma 4.13]{Br_stle_2019} also walls in $A$. Let $T$ be an arbitrary support $\tau$-tilting pair over $A/I$. We show directly that it lies in the same component of the mutation quiver as $B = (A/I,0)$ or $B^\dagger = (0,A/I)$. Note that $C(T)$ need not be a chamber in the wall and chamber structure of $A$, but every point $x \in C(T)$ is contained in a chamber in the wall and chamber structure of $A$. Fix such a point $x$, and assume that it is contained in $C(T')$. Of course, $T'$ lies in the component of either $(A,0)$ or $(0,A)$, and there is therefore a $\mathcal{D}$-generic path $\gamma$ from one of the chambers $C(A,0)$ or $C(0,A)$ to $C(T')$ passing only finitely many walls. It is not hard to see that $\gamma$ is $\mathcal{D}$-generic as a path in the wall and chamber structure of $B$, and passes only finitely many walls here. Thus $T$ must lie in either the same component as $B$ or $B^\dagger$ in $Q(s\tau\text{-tilt } B)$.
\end{proof}

Before proving Theorem \ref{k2-reduction}, we need a small lemma.

\begin{lemma}\label{condition-gvector-k2reduction}
	The algebra $A$ as in Theorem \ref{k2-reduction} has no $g$-vectors on the form $(1,-1)$ or $(-1,1)$
\end{lemma}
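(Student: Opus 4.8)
The plan is to show directly that no $\tau$-rigid module over $A = kQ/r^3$ has $g$-vector $(1,-1)$ or $(-1,1)$, and since $\tau$-rigid pairs involving a projective summand have $g$-vectors of the form $g^M - g^P$ with $g^{P(1)} = (1,0)$, $g^{P(2)} = (0,1)$, one checks those cases are also impossible. A $g$-vector $(1,-1)$ would have to come from a minimal projective presentation $P(2) \xrightarrow{d_1} P(1) \to M \to 0$ with $d_1$ a radical map $P(2) \to P(1)$, so $M = \coker(d_1)$; symmetrically $(-1,1)$ forces $P(1) \xrightarrow{d_1} P(2) \to M \to 0$. So the strategy reduces to: (i) describe the indecomposable projectives $P(1)$, $P(2)$ over $A$ explicitly via their radical layers, (ii) enumerate the radical maps $P(2) \to P(1)$ (equivalently, a basis of $\underline r \cdot \mathrm{pt}$ of the appropriate graded piece), (iii) for each such map $\phi$, compute the cokernel and either show $\phi$ is not injective (so the presentation is not the one giving $g$-vector $(1,-1)$, since then $g^M$ has a smaller first coordinate after cancelling, or $M$ is not the claimed module), or show the presentation fails to be \emph{minimal}, or show the resulting module $M$ is not $\tau$-rigid.

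Concretely, first I would write down the structure of $A$. Since $Q$ has two arrows $\alpha,\beta: 1\to 2$ and two arrows $\gamma,\delta: 2\to 1$ and we kill all paths of length $3$, the projective $P(1)$ has top $S(1)$, radical spanned by $\alpha,\beta$ (a copy of $S(2)^2$ in degree $1$), and $r^2 P(1)$ spanned by the length-$2$ paths starting at $1$: $\gamma\alpha, \gamma\beta, \delta\alpha, \delta\beta$, i.e.\ $S(1)^4$ in degree $2$; symmetrically for $P(2)$. In particular $\dim P(1) = \dim P(2) = 1 + 2 + 4 = 7$ with dimension vector $(5,2)$ for $P(1)$ and $(2,5)$ for $P(2)$. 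A radical homomorphism $P(2) \to P(1)$ is determined by the image of the generator $e_2$, which must land in $\underline r\, e_1 P(1)$ restricted to the appropriate summand; the relevant maps send the generator of $P(2)$ into the degree-$1$ part $\langle \gamma, \delta\rangle$ of $P(1)$ or into the degree-$2$ part. I would argue that for the presentation to be minimal (the differential lies in the radical) and to have exactly the projective terms $P(2)$ in degree $-1$, $P(1)$ in degree $0$, the map $\phi$ must be injective with $2$-dimensional image, but a quick dimension count shows $\coker\phi$ then has dimension vector $(5,2)-(2,5) = (3,-3)$, which is \emph{negative} in the second coordinate — an immediate contradiction. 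The same computation rules out $(-1,1)$ by the symmetry $1 \leftrightarrow 2$ of $Q$.

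The one subtlety is that a $g$-vector $(1,-1)$ could a priori also arise from a $\tau$-rigid \emph{pair} $(M,P)$ rather than a module, i.e.\ $g^M - g^P = (1,-1)$. If $P = P(2)$ then $g^M = (1,0) = g^{P(1)}$, forcing $M = P(1)$; but then $\Hom_A(P(2),P(1)) \neq 0$ contradicts the defining condition $\Hom_A(P,M) = 0$ of a $\tau$-rigid pair. If $P = P(1) \oplus P(2)$ then $g^M = (2,0)$, and the unique module with that $g$-vector is $P(1)^2$ (again by uniqueness of $g$-vectors), and again $\Hom_A(P(1) \oplus P(2), P(1)^2) \neq 0$. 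If $P = 0$ we are back to the module case handled above; $P = P(1)$ would give $g^M = (2,-1)$, not $(1,-1)$. The analogous list for $(-1,1)$ is symmetric. So the main obstacle — and it is a genuinely finite, mechanical one rather than a conceptual one — is the bookkeeping of the few possible projective presentations and pairs, where each case collapses either by a dimension-vector positivity check or by a non-vanishing $\Hom$ that violates the $\tau$-rigid pair axiom. I would present the module case via the dimension-vector contradiction ($\dim\coker = (3,-3) < 0$) as the heart of the argument, then dispatch the pair cases in a sentence each.
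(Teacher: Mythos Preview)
Your central dimension-vector argument fails. A nonzero radical map $\phi\colon P(2) \to P(1)$ is determined by $\phi(e_2) = a\alpha + b\beta$ (the degree-$1$ part of $P(1)$ is spanned by $\alpha,\beta$, not by $\gamma,\delta$ as you wrote), and since every path of length $\geq 3$ vanishes in $A = kQ/r^3$ one has $\phi(r^2 P(2)) = 0$. Thus $\phi$ is \emph{never} injective: its kernel always contains the $4$-dimensional submodule $r^2 P(2)$. For $(a,b)\neq (0,0)$ the image of $\phi$ has dimension vector $(2,1)$, so $M = \coker\phi$ has dimension vector $(5,2)-(2,1) = (3,1)$, which is perfectly positive. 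Moreover this presentation \emph{is} minimal (the image $\mathrm{im}\,\phi$ has simple top $S(2)$, so $P(2)$ is its projective cover). Hence there is in fact a $\mathbb{P}^1$-family of modules over $A$ with $g$-vector $(1,-1)$; your ``$(3,-3)$'' contradiction never arises. The phrase ``$\phi$ must be injective with $2$-dimensional image'' is already self-contradictory and signals the error.

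What remains --- showing that none of these cokernels is $\tau$-rigid --- is the actual content of the lemma, and your proposal does not touch it. One could try to verify $\Hom_A(M,\tau M)\neq 0$ by hand for each $M$ in the family, but that is a substantially different and more delicate computation than the bookkeeping you describe. The paper sidesteps it entirely with a geometric argument: the quotient $A/(\gamma,\delta)$ is the Kronecker algebra, so its infinitely many $g$-vectors $(i+1,-i)$ give walls in the wall-and-chamber structure of $A$ accumulating on the ray through $(1,-1)$. Were $(1,-1)$ the $g$-vector of an indecomposable $\tau$-rigid pair, it would bound a cone $C(T)$ of a support $\tau$-tilting pair $T$, and the cone of the adjacent mutation of $T$ would then be forced to contain infinitely many of these walls --- impossible, since the interior of such a cone is a chamber.
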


\begin{proof}
	Let $B = A/(\gamma,\delta)$. Then $B$ is simply the Kronecker algebra, and thus have a sequence of $g$-vectors $(2,-1),(3,-2),(4,-3),\dots$ which by \cite[Lemma 4.13]{Br_stle_2019} must be walls in $A$. Thus if $(1,-1)$ was a $g$-vector of $A$ it would be the wall of a cone $C(T)$ for some support $\tau$-tilting pair $T$. This pair $T$ has a unique right mutation, say $T'$. The cone $C(T')$ would share a wall with $C(T)$ and lie counterclockwise to the cone of $T$, and thus would necessarily contain infinitely many walls. This is impossible as the interior of cones are chambers, and hence cannot contain walls by definition.
	A symmetrical argument shows that $(-1,1)$ cannot be a $g$-vector of $A$.
\end{proof}

%In the above proof it would be enough to consider the cone $C(T)$ to obtain a contradiction, as there is also a sequence of $g$-vectors $(1,-2),(2,-3),(3,-4),\dots$ coming from $B$ as well, and there is thus no room for the assumed cone $C(T)$.

\begin{proof}[Proof of Theorem \ref{k2-reduction}]
	We first investigate the component of $Q(s\tau \text{-tilt } A)$ containing $(P(1) \oplus P(2),0)$.
	
	Let $K$ be the Kronecker algebra, with projectives $P_K(1)$ and $P_K(2)$. Note that $hom_A(P(1),P(2)) = 2 = hom_A(P(2),P(1))$, as paths of length $3$ are killed in $A$. This symmetry allows us to utilize Lemma \ref{inherit-tech} in two ways.
	
	We define the functor $F$ by sending $P_K(1)$ to $P(1)$ and $P_K(2)$ to $P(2)$ and respecting morphisms as in Example \ref{K2example}.
	
	By Lemma \ref{condition-gvector-k2reduction}, we may utilize Lemma \ref{indecomposability-condition}. This means that the $g$-vectors on the form $(i+1,-i)$ for $i \geq 0$ guaranteed by Lemma \ref{inherit-tech} correspond to indecomposable modules. If we denote by $T_i$ the $\tau$-rigid module with $g$-vector $(i+1,-i)$, we know that $T_i \oplus T_{i+1}$ is $\tau$-rigid, meaning that there cannot be any $g$-vectors not on the form $(i+1,-i)$ lying in the union of the cones $\cup_{i \geq 0} C(T_i \oplus T_{i+1})$. Notice that the closure of these cones is the positive span of the vectors $(1,0)$ and $(1,-1)$. 
	
	Now, we can apply two symmetries to complete the argument. First, we may utilize Lemma $\ref{inherit-tech}$ on the functor sending $P(1)$ to $P_K(2)$ and $P(2)$ to $P_K(1)$, giving $g$-vectors on the form $(-i,i+1)$. These $g$-vectors show that the union of the cones of the support $\tau$-tilting pairs in the same component as $(A,0)$ hits all points $(x,y)$ in $\mathbb{R}^2$ where $x + y > 0$.
	
	Secondly, we may utilize the general duality in $\tau$-tilting theory, which allows us to compute right mutations using left mutations in the opposite algebra. Since $A$ is symmetric, and $g$-vectors respect this duality, we can conclude that the closure of all the cones we have identified is in fact $\mathbb{R}^2$, leaving no room for more chambers. Thus we have identified all support $\tau$-tilting objects of $A$ up to isomorphism.

\end{proof}

%A key observation used in their proof is that if a set of support $\tau$-tilting pairs is identified such that the closure of their cones is $\mathbb{R}^n$ for the appropriate $n$, there are no other support $\tau$-tilting pairs up to isomorphism. Using the results from \cite{Br_stle_2019} and \cite{asai2020wallchamber}, we can slightly generalize this to observe that it is sufficient to identify a set of support $\tau$-tilting objects such that the closure of the union all identified chambers together with the walls fill $\mathbb{R}^3$. 

\subsection{Mutation quivers with multiple connected components}

We now give an example of an algebra with more than one component in its mutation quiver.

\begin{lemma}\label{4components}
	Let $A = kQ/r^2$ with the $Q$ as displayed below.
	
	\[\begin{tikzcd}
		1 
		\arrow[r,bend left,shift right = 0.5ex, ""]
		\arrow[r,bend left, shift left = 2ex, ""]
		& 2 \arrow[l,bend left,shift right = 0.5ex,""]
		\arrow[l,bend left,shift left = 2ex, ""]  & 
		3 
		\arrow[r,bend left,shift right = 0.5ex, ""]
		\arrow[r,bend left, shift left = 2ex, ""]
		& 4 \arrow[l,bend left,shift right = 0.5ex,""]
		\arrow[l,bend left,shift left = 2ex, ""]  \\
	\end{tikzcd}
	\]
	
	% https://q.uiver.app/?q=WzAsNCxbMCwwLCJcXGJ1bGxldCJdLFswLDEsIlxcYnVsbGV0Il0sWzIsMCwiXFxidWxsZXQiXSxbMiwxLCJcXGJ1bGxldCJdLFswLDEsIiIsMix7ImN1cnZlIjoxfV0sWzAsMSwiIiwwLHsiY3VydmUiOjJ9XSxbMSwwLCIiLDEseyJjdXJ2ZSI6MX1dLFsxLDAsIiIsMSx7ImN1cnZlIjoyfV0sWzIsMywiIiwxLHsiY3VydmUiOjF9XSxbMiwzLCIiLDEseyJjdXJ2ZSI6Mn1dLFszLDIsIiIsMSx7ImN1cnZlIjoxfV0sWzMsMiwiIiwxLHsiY3VydmUiOjJ9XSxbMCwyXV0=

	Then $Q(s\tau\text{-tilt} A)$ has exactly $4$ connected components.
\end{lemma}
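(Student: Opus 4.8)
The plan is to reduce to the connected factors of $A$ and then invoke Theorem \ref{k2-reduction}. The quiver $Q$ is the disjoint union of two copies of the quiver $Q_0$ on vertices $\{1,2\}$ with two arrows $1\to 2$ and two arrows $2\to 1$ — that is, two copies of the quiver of Theorem \ref{k2-reduction} — so $A = kQ/r^2 \cong \Lambda\times\Lambda$ where $\Lambda := kQ_0/r^2$. Note in addition that $\Lambda$ is the quotient of the algebra $A' := kQ_0/r^3$ of Theorem \ref{k2-reduction} by the ideal $\underline r^2$, which lies in the radical of $A'$. By Lemma \ref{disconnected_mutation_quiver}, $Q(s\tau\text{-tilt }A)\cong Q(s\tau\text{-tilt }\Lambda)\mathbin{\Box}Q(s\tau\text{-tilt }\Lambda)$. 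I would then record the elementary fact that the number of connected components of a Cartesian product $G\mathbin{\Box}H$ of quivers is the product of the numbers of connected components of $G$ and of $H$: two vertices lie in the same component if and only if both coordinate projections do, since a walk in the product projects to walks in each factor, and conversely one concatenates a walk in the first factor (at fixed second coordinate) with a walk in the second. Thus everything reduces to showing that $Q(s\tau\text{-tilt }\Lambda)$ has exactly two components, which then gives $2^2 = 4$ for $A$.

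To show $Q(s\tau\text{-tilt }\Lambda)$ has exactly two components I would rerun the proof of Theorem \ref{k2-reduction} with $\Lambda$ in place of $A'$, after checking that that proof uses nothing about $A'$ beyond the following three features, all of which persist when $r^3$ is replaced by $r^2$: (i) $hom_\Lambda(P(1),P(2)) = hom_\Lambda(P(2),P(1)) = 2$, since in each direction there are the two arrows and every path of length $\geq 2$ dies; (ii) $\Lambda\cong\Lambda^{\mathrm{op}}$ via the evident symmetry of $Q_0$; and (iii) neither $(1,-1)$ nor $(-1,1)$ is a $g$-vector of $\Lambda$. For (iii) the proof of Lemma \ref{condition-gvector-k2reduction} applies verbatim: $\Lambda/(\gamma,\delta)$ is again the Kronecker algebra, its preprojective $g$-vectors $(2,-1),(3,-2),\dots$ are walls of $\Lambda$ by \cite[Lemma 4.13]{Br_stle_2019}, and a cone with wall $(1,-1)$ would force the cone of its counterclockwise neighbour to contain infinitely many of those walls, which is impossible since cone interiors are chambers. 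With (i)--(iii) in hand, the argument of Theorem \ref{k2-reduction} — Lemma \ref{inherit-tech} and Lemma \ref{indecomposability-condition} applied through both Kronecker functors, together with the $\Lambda\cong\Lambda^{\mathrm{op}}$ duality — shows that the union of the cones of the support $\tau$-tilting pairs in the component of $(\Lambda,0)$ has closure $\{(x,y): x+y\geq 0\}$, and likewise the component of $(0,\Lambda)$ gives $\{(x,y): x+y\leq 0\}$; by \cite[Corollary 6.7, b)]{dij17}, distinct support $\tau$-tilting pairs have cones meeting only in proper faces, so these two families already account for every support $\tau$-tilting pair, and since the third quadrant $C(0,\Lambda)$ is not in the first family the two components are genuinely distinct. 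Hence exactly two.

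As a cross-check, the upper bound ``at most two components'' also follows at once from Corollary \ref{k2-reduction-quotient} applied to $\Lambda = A'/\underline r^2$, and the lower bound can be seen directly too: by \cite[Lemma 4.13]{Br_stle_2019} the infinitely many walls of the Kronecker quotient accumulating along the anti-diagonal are walls of $\Lambda$ separating the first quadrant $C(\Lambda,0)$ from the third quadrant $C(0,\Lambda)$, so by \cite[Theorem 4.3]{Br_stle_2019} no finite mutation sequence joins $(\Lambda,0)$ to $(0,\Lambda)$. The step that needs the most care — and the one I would write out in full — is the verification that the proof of Theorem \ref{k2-reduction} genuinely relies only on properties (i)--(iii); I expect this to be bookkeeping rather than a real obstacle, as are the product decomposition and the graph-theoretic lemma.
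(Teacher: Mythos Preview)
Your proposal is correct and follows the same route as the paper: factor $A$ as $\Lambda\times\Lambda$, apply Lemma \ref{disconnected_mutation_quiver}, and show that each factor's mutation quiver has exactly two components. The paper's own proof is precisely the one-liner you list as a cross-check --- it simply cites Corollary \ref{k2-reduction-quotient} (together with Lemma \ref{disconnected_mutation_quiver}) and leaves the lower bound and the component count of a Cartesian product implicit --- so your plan to rerun the full argument of Theorem \ref{k2-reduction} for $\Lambda=kQ_0/r^2$ is sound but more work than needed.
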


\begin{proof}
	This follows immediately from Corollary \ref{k2-reduction-quotient} and Lemma \ref{disconnected_mutation_quiver}.
\end{proof}

Observe that the above result can be generalized to show that there is no bound on the number of components a mutation quiver can have, as we can simply take arbitrarily many copies of the algebra with two simple modules that we started with. We are not aware of any algebras with infinitely many components in its mutation quiver and consider it an interesting question whether such an algebra exists. 

The theorem below shows that we can also construct a connected algebra where the property of having more than two components in its mutation quiver is preserved.

\begin{theorem}\label{connected_more_components}
	Let $A = kQ/r^2$ with the quiver $Q$ as displayed below.
	
			\[\begin{tikzcd}
			1 
			\arrow[r,bend left,shift right = 0.5ex, ""]
			\arrow[r,bend left, shift left = 2ex, ""]
			& 2 \arrow[l,bend left,shift right = 0.5ex,""]
			\arrow[l,bend left,shift left = 2ex, ""]  
				\arrow[r, ""] & 
			3 
			\arrow[r,bend left,shift right = 0.5ex, ""]
			\arrow[r,bend left, shift left = 2ex, ""]
			& 4 \arrow[l,bend left,shift right = 0.5ex,""]
			\arrow[l,bend left,shift left = 2ex, ""]  \\
		\end{tikzcd}
		\]
	
	% https://q.uiver.app/?q=WzAsNCxbMCwwLCJcXGJ1bGxldCJdLFswLDEsIlxcYnVsbGV0Il0sWzIsMCwiXFxidWxsZXQiXSxbMiwxLCJcXGJ1bGxldCJdLFswLDEsIiIsMix7ImN1cnZlIjoxfV0sWzAsMSwiIiwwLHsiY3VydmUiOjJ9XSxbMSwwLCIiLDEseyJjdXJ2ZSI6MX1dLFsxLDAsIiIsMSx7ImN1cnZlIjoyfV0sWzIsMywiIiwxLHsiY3VydmUiOjF9XSxbMiwzLCIiLDEseyJjdXJ2ZSI6Mn1dLFszLDIsIiIsMSx7ImN1cnZlIjoxfV0sWzMsMiwiIiwxLHsiY3VydmUiOjJ9XSxbMCwyXV0=

	Then $Q(s\tau\text{-tilt} A)$ has at least $4$ connected components.
\end{theorem}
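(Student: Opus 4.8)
The strategy is to exploit the wall-and-chamber machinery from Section 2 together with the fact that the algebra in Lemma~\ref{4components} (call it $A'$, on the disconnected quiver obtained from $Q$ by deleting the single arrow $2\to 3$) is a quotient of $A$, since $A = kQ/r^2$ and $A' = A/(\text{the class of the arrow }2\to 3)$. By \cite[Lemma 4.13]{Br_stle_2019}, every wall of $A'$ is a wall of $A$; hence the wall-and-chamber structure of $A$ refines that of $A'$. The plan is to transfer the $4$-component statement for $A'$ to a lower bound of $4$ components for $A$ by showing that two chambers lying in distinct components of the mutation quiver of $A'$ still lie in distinct components of the mutation quiver of $A$.

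First I would fix, for each of the four connected components of $Q(s\tau\text{-tilt }A')$, a representative support $\tau$-tilting pair, and hence a chamber $C_1,C_2,C_3,C_4$ in $\mathbb{R}^4$ for $A'$. Each $C_j$ is an open subset of $\mathbb{R}^4$ and, being a union of (or contained in) chambers of the finer structure for $A$, meets at least one chamber of $A$; pick a point $x_j$ in the interior of a chamber $D_j$ of $A$ with $x_j \in C_j$. I would then argue the contrapositive of what we want: if $D_i$ and $D_j$ were in the same component of $Q(s\tau\text{-tilt }A)$, there would be a $\mathcal{D}$-generic path $\gamma$ for $A$ from $x_i$ to $x_j$ crossing only finitely many walls (by \cite[Theorem 4.3]{Br_stle_2019}). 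Perturbing $\gamma$ slightly if necessary so that it avoids the (codimension $\geq 2$) intersections of walls of $A'$, the path $\gamma$ is also $\mathcal{D}$-generic for the coarser structure of $A'$ and crosses only finitely many walls of $A'$ --- exactly the argument used in the proof of Corollary~\ref{k2-reduction-quotient}. Running \cite[Theorem 4.3]{Br_stle_2019} in the $A'$ direction then produces a sequence of mutations connecting a support $\tau$-tilting pair of $A'$ whose chamber contains $x_i$ (which lies in component $i$) to one whose chamber contains $x_j$ (component $j$), contradicting $i \neq j$. Hence the $D_j$ lie in four pairwise distinct components of $Q(s\tau\text{-tilt }A)$, giving the lower bound.

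The main obstacle is the perturbation step: I must be sure that the $x_j$ can be chosen, and the path $\gamma$ arranged, so that $\gamma$ is simultaneously $\mathcal{D}$-generic for both wall-and-chamber structures --- crossing walls of $A$ transversally one at a time, and likewise for the (sub)set of walls that come from $A'$. This is where one uses that walls are finite unions of rational hyperplanes and their pairwise intersections have codimension at least $2$, so a generic small perturbation of any finite path avoids all the bad loci; since $\gamma$ crosses only finitely many walls of $A$, only finitely many walls of $A'$ are relevant along $\gamma$, so the perturbation is over a finite configuration. A secondary point to check is that each chamber $C_j$ of $A'$ genuinely contains an interior point of some chamber $D_j$ of $A$; this is immediate because $C_j$ is open and nonempty and the walls of $A$ form a locally finite closed set of measure zero (indeed a countable union of hyperplanes), so $C_j$ cannot be covered by them. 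Once these two points are granted, the argument is a routine reprise of the proof of Corollary~\ref{k2-reduction-quotient}, read in the opposite direction.
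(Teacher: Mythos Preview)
Your plan is correct and uses the same core mechanism as the paper: both proofs exploit that the disconnected algebra $B$ of Lemma~\ref{4components} is a quotient of $A$, so walls of $B$ are walls of $A$ by \cite[Lemma~4.13]{Br_stle_2019}, and then argue via $\mathcal{D}$-generic paths that chambers lying in distinct mutation components for $B$ must lie in distinct components for $A$.

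The difference is only in how the four $A$-chambers are produced. The paper constructs explicit support $\tau$-tilting pairs over $A$, namely $T_1=(P(3)\oplus P(4),\,P(1)\oplus P(2))$ and $T_2=(X\oplus P(2),\,P(3)\oplus P(4))$ with $X$ obtained from an explicit mutation, verifies directly that they are support $\tau$-tilting, and checks that their $G$-matrices coincide with those of specific pairs over $B$ lying in the four different components. You instead argue abstractly: each $B$-chamber $C_j$ is open, the union of $A$-walls has measure zero, hence $C_j$ meets some $A$-chamber, which by Asai's theorem is $C^\circ(T_j)$ for a support $\tau$-tilting pair $T_j$. Your route is shorter and uniform over all four components, but it invokes \cite[Theorem~3.17]{asai2020wallchamber}; the paper's route is more hands-on, avoids Asai's theorem, and yields concrete representatives of the four components.

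One small correction: the union of walls of $A$ is \emph{not} locally finite or closed here (walls accumulate, since $A$ is $\tau$-tilting infinite), and walls are not full hyperplanes. What you actually need, and what is true, is that this union is contained in countably many hyperplanes (one per dimension vector) and hence has Lebesgue measure zero; that already gives the point $x_j$. Your perturbation step is also unnecessary: a path $\mathcal{D}$-generic for $A$ is automatically $\mathcal{D}$-generic for $B$, since every $B$-wall is an $A$-wall and the path already crosses $A$-walls singly and transversally; this is exactly how the paper argues in the proof of Corollary~\ref{k2-reduction-quotient}.
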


\begin{proof}
	
	Note that $\Hom_A(P(1)\oplus P(2),P(3)\oplus P(4)) = 0$ as there are no paths from vertices $3,4$ to vertices $1,2$. Thus \[T_1 = (P(3) \oplus P(4), P(1) \oplus P(2))\] is a support $\tau$-tilting pair. We intend now to show that it may not lie within a finite number of right or left mutations from $(A,0)$ and $(0,A)$. This means that $T_1$ must lie in a third component of $Q(s\tau \text{-tilt} A)$, as wanted.
	
	From \cite[Lemma 4.13]{Br_stle_2019} walls in the algebra in Lemma \ref{4components} are also walls in the wall and chamber structure of $A$. Let $B$ be the algebra in Lemma \ref{4components}.
	
	Consider the support $\tau$-tilting pair $T_{B,1} = (P_B(3) \oplus P_B(4), P_B(1) \oplus P_B(2))$ over $B$. Then $T_{B,1}$ and $T_1$ will have equal $G$-matrices.
	
	\[G^{T_1} = \begin{bmatrix}
		0 & 0 & -1 & 0 \\ 0 & 0 & 0 & -1 \\ 1 & 0 & 0 & 0 \\ 0 & 1 & 0 & 0
	\end{bmatrix} = G^{T_{B,1}}\]

	This gives that $g^{T_1}$ lies in the cone of $T_{B,1}$. Assume now that there is a $\mathcal{D}$-generic path $\gamma$ from $g^{T_1}$ to $g^{(A,0)} = (1,1,1,1)$ in $Q(s\tau\text{-tilt } A)$. In the wall and chamber structure of $B$, this path is also $\mathcal{D}$-generic and must cross infinitely may walls since $(B,0)$ and $T_{B,1}$ lie in different components of $Q(s\tau\text{-tilt} B)$. Now, since walls in $B$ induce walls in $A$, we see that $\gamma$ must pass infinitely many walls also in $A$. 
	
	Then, by the properties of $\mathcal{D}$-generic paths $T_1$, can not lie in the same component as $(A,0)$. A similar argument shows that it does not lie in the same component as $(0,A)$.
	
	To identify a fourth component, observe first that $(P(1)\oplus P(2),P(3)\oplus P(4))$ is not support $\tau$-tilting, as there is a nonzero map $P(3) \to P(1)$ induced by the path of length $1$ from point $1$ to point $3$. We consider instead the mutation $\mu_{P(1)}(P(1)\oplus P(2)\oplus P(3)\oplus P(4))$.
	
	Since $\Hom(P(1),P(4)) = 0 = \Hom_A(P(1),P(3))$, the mutation $\mu_{P(1)}(P(1)\oplus P(2)\oplus P(3)\oplus P(4))$ may be computed as a co-kernel of $P(2) \oplus P(2)$ using the mutation technique in $\tau$-tilting theory as developed in \cite{tau}.
	
	Let then \[X \oplus P(2) \oplus P(3) \oplus P(4) = \mu_{P(1)}(P(1)\oplus P(2) \oplus P(3)\oplus P(4))\]  $X$ can easily be computed to be a module with dimension vector $(3,2,0,0)$ and $g$-vector $(-1,2,0,0)$. 
	
	Since $\Hom_A(P(3)\oplus P(4),X\oplus P(2)) = 0$ we get that \[T_2 = (X \oplus P(2), P(3) \oplus P(4))\] is a support $\tau$-tilting pair. Consider $T_{B,2} = \mu_{P_B(1)}((P_B(1) \oplus P_B(2), P_B(3) \oplus P_B(4)))$. Then $T_{B,2}$ and $T_2$ will have equal $G$-matrices.
	
	\[G^{T_2} = \begin{bmatrix}
		-1 & 0 & 0 & 0 \\ 2 & 1 & 0 & 0 \\ 0 & 0 & -1 & 0 \\ 0 & 0 & 0 & -1
	\end{bmatrix} = G^{T_{B,2}}\]
	
	This means in particular that $g^{T_2}$ lies in the cone of $T_{B,2}$. We can then argue as above that $T_2$ can not lie in the component containing $(A,0)$ or the component containing $(0,A)$. 
	
	Lastly, $T_2$ and $T_1$ themselves must also lie in two distinct components of $Q(s\tau\text{-tilt } A)$, because their $g$-vectors lie in cones corresponding to support $\tau$-tilting objects of $B$ lying in distinct components.
	
	This concludes our proof.
\end{proof}

%%%%%%%%%%%%%%%%%%%%%%%%%%%%%%%%%%%%%%
\printbibliography

\end{document}